\documentclass[11pt]{amsart}

\usepackage{amsmath,amssymb,amsthm,mathtools}
\usepackage{graphicx}
\usepackage[colorlinks=true,linkcolor=blue,citecolor=blue,urlcolor=blue]{hyperref}
\usepackage{tikz}
\usetikzlibrary{arrows.meta,calc,positioning}

\newtheorem{theorem}{Theorem}[section]
\newtheorem{proposition}[theorem]{Proposition}
\newtheorem{lemma}[theorem]{Lemma}
\newtheorem{corollary}[theorem]{Corollary}
\theoremstyle{definition}
\newtheorem{definition}[theorem]{Definition}
\newtheorem{remark}[theorem]{Remark}
\newtheorem{example}[theorem]{Example}
\newtheorem{question}[theorem]{Question}
\newtheorem{conjecture}[theorem]{Conjecture}
\newtheorem{assumption}[theorem]{Assumption}

\newcommand{\RR}{\mathbb{R}}
\newcommand{\len}{\operatorname{Len}}
\newcommand{\Thi}{\operatorname{Thi}}
\newcommand{\Rop}{\operatorname{Rop}}
\newcommand{\Cr}{\operatorname{Cr}}
\newcommand{\diam}{\operatorname{diam}}

\newcommand{\CRad}{\operatorname{CRad}}
\newcommand{\Pack}{\operatorname{Pack}}

\newcommand{\Conv}{\operatorname{Conv}}
\newcommand{\Vol}{\operatorname{Vol}}
\newcommand{\trunk}{\operatorname{trunk}}
\newcommand{\strunk}{\operatorname{strunk}}

\newcommand{\deltaK}{\operatorname{distort}}
\newcommand{\Hg}{\mathcal{H}}
\newcommand{\Pol}{\mathcal{P}}
\newcommand{\eps}{\varepsilon}
\newcommand{\sD}{\mathsf{D}}

\title[Geometric densities and compression radii]{Geometric densities and compression radii of knot types}

\author{Makoto Ozawa}
\address{Department of Natural Sciences, Komazawa University, Tokyo, Japan}
\email{w3c@komazawa-u.ac.jp}

\subjclass[2020]{57K10, 57K14, 53A04, 52C25}
\keywords{knot, ropelength, thickness, geometric density, compression radius, packing ratio, distortion, trunk, polygonal approximation}

\begin{document}

\begin{abstract}
We introduce scale-free compression radii and packing ratios of knot types and clarify their relation to geometric densities.  Given a Euclidean-invariant, scale-covariant size functional \(D\) on embedded curves, we define the \(D\)-density by \(\len(\gamma)/D(\gamma)\) and the \(D\)-compression radius by \(D(\gamma)/\Thi(\gamma)\).  At the level of a single representative these quantities give the elementary factorization
\[
 \frac{\len(\gamma)}{\Thi(\gamma)}
 =
 \frac{\len(\gamma)}{D(\gamma)}\cdot
 \frac{D(\gamma)}{\Thi(\gamma)}.
\]
The point of the paper is not this cancellation itself, but the separation it suggests after optimization: density, compression, and ropelength generally have different minimizing sequences.  We give a criterion for equality after optimization, record the basic optimized inequality, and compute the unknot case for diameter and minimal enclosing radius.  We then prove polygonal approximation theorems for compression radii for \(D=\diam\) and \(D=R_{\min}\), using standard convergence of polygonal thickness, and formulate the corresponding hypotheses for other \(L^p\) size functionals.  Finally, we discuss how the resulting quantities interact with distortion, trunk, and supertrunk.  The framework is intended as a structural companion to density-type invariants rather than as an immediate improvement of the strongest known ropelength lower bounds.  In particular, the formal decomposition alone does not produce new ropelength lower bounds; such bounds would require independent estimates for the density and compression factors.
\end{abstract}

\maketitle

%==================================================
\section{Introduction}
%==================================================

Ropelength is one of the central scale-invariant quantities in geometric knot theory.  If $\gamma\subset\RR^3$ is a rectifiable embedded closed curve with positive thickness, its ropelength is
\[
 \Rop(\gamma)=\frac{\len(\gamma)}{\Thi(\gamma)}.
\]
For a knot type $K$, the invariant $\Rop(K)$ is obtained by minimizing this ratio over all representatives of $K$.  In this form, ropelength measures how much length is required relative to the thickness of the tube that can be placed around the curve.  The existence and structure of ropelength minimizers have been studied in depth; see, for example, \cite{CKS,Diao,RawdonSimon06}.

The purpose of the present paper is to separate the ropelength ratio into two geometrically different factors.  Let $D(\gamma)$ be a size functional measuring the spatial spread of $\gamma$.  Examples include diameter, minimal enclosing radius, radius of gyration, $L^p$-radial size, pairwise $L^p$-spread, and the cube root of the volume of the convex hull.  Then each representative satisfies the elementary but useful identity
\[
 \frac{\len(\gamma)}{\Thi(\gamma)}
 =
 \frac{\len(\gamma)}{D(\gamma)}
 \cdot
 \frac{D(\gamma)}{\Thi(\gamma)}.
\]
The first factor is a density-type quantity: it measures how much length is distributed relative to the chosen spatial scale $D$.  The second factor is a compression-radius-type quantity: it measures how large the spatial scale is relative to the thickness.  Thus ropelength can be viewed as
\[
 \text{ropelength}=\text{geometric density}\times\text{scale-free compression radius}.
\]
This is the guiding principle of the paper.

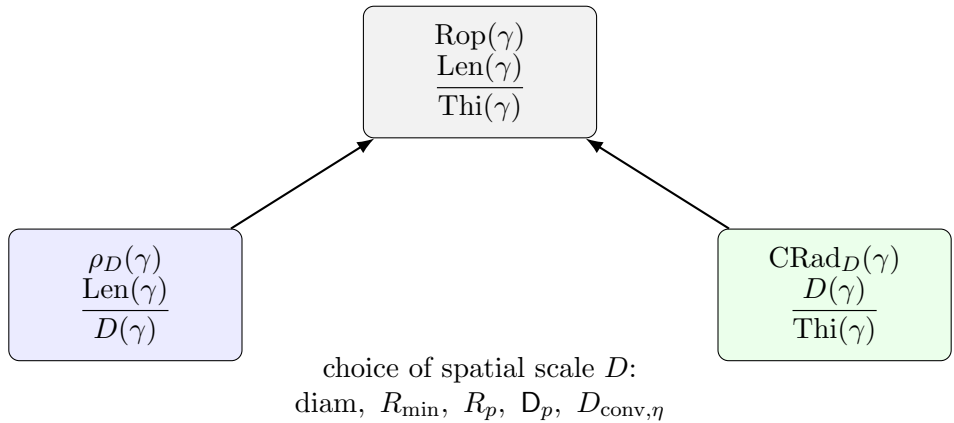
\begin{figure}[t]
\centering
\begin{tikzpicture}[>=Latex, node distance=12mm and 16mm, box/.style={draw, rounded corners, align=center, inner sep=6pt, minimum width=31mm}]
\node[box, fill=gray!10] (rop) {$\Rop(\gamma)$\\ $\displaystyle \frac{\len(\gamma)}{\Thi(\gamma)}$};
\node[box, fill=blue!8, below left=of rop] (dens) {$\rho_D(\gamma)$\\ $\displaystyle \frac{\len(\gamma)}{D(\gamma)}$};
\node[box, fill=green!8, below right=of rop] (comp) {$\CRad_D(\gamma)$\\ $\displaystyle \frac{D(\gamma)}{\Thi(\gamma)}$};
\draw[->, thick] (dens) -- (rop);
\draw[->, thick] (comp) -- (rop);
\node[align=center] at ($(dens)!0.5!(comp)+(0,-1.25)$) {choice of spatial scale $D$:\\ $\diam,\ R_{\min},\ R_p,\ \mathsf{D}_p,\ D_{\mathrm{conv},\eta}$};
\end{tikzpicture}
\caption{The representative-level decomposition of ropelength into a density factor and a compression factor. After optimization over a knot type, the minimizing representatives for the three quantities may differ.}
\label{fig:decomposition-schematic}
\end{figure}

Ratios of length to a spatial scale, especially those involving diameter or radius of gyration, are not new in geometric knot theory and polymer geometry.  Likewise, thickness and tube packing are central in the ropelength literature.  The contribution of the present paper is more modest and more specific: we organize these familiar quantities into a two-factor framework and introduce the complementary optimized invariant $D(\gamma)/\Thi(\gamma)$ as a scale-free compression radius.  The subsequent questions concern the failure of simultaneous optimization and the behavior of the compression factor under polygonal approximation.

The framework is meant to be read against the large literature on ropelength bounds in terms of crossing number.  Let \(\Cr(K)\) denote the crossing number.  With our convention \(\Thi\) is the tube radius; if one writes the tube diameter as \(\tau\), then \(\Thi=\tau/2\) and \(\Rop=\len/\Thi=\len/(\tau/2)\).  Known results show that ropelength has several different asymptotic regimes.  For arbitrary knot types there are universal lower bounds of order \(\Cr(K)^{3/4}\), while Diao--Ernst--Por--Ziegler proved a quasilinear upper bound \(\Rop(K)\leq a\Cr(K)\log^5\Cr(K)\) for a universal constant \(a>0\) \cite{BuckSimon99,DiaoErnstPorZiegler}.  Denne--Diao--Sullivan proved the quadrisecant lower bound \(\Rop(K)\geq 15.66\) for every nontrivial knot \cite{DenneDiaoSullivan}.  For alternating knots, Diao proved the linear lower bound \(\Rop(K)>b_0\Cr(K)\) with \(b_0>1/56\) \cite{DiaoAlt}.  By contrast, non-alternating torus knots exhibit the classical \(3/4\)-power behavior: their ropelengths admit constructions of order \(\Cr^{3/4}\), and recent work using concentric helices substantially improves the constants in this regime \cite{KlotzThompson}.  These results provide benchmarks for any proposed density-compression decomposition: to recover a known lower bound through the present framework, one must prove corresponding independent lower bounds for both the density and compression factors, or for their product.

This factorization is formally simple.  Accordingly, we do not present it as a deep ropelength theorem.  The mathematical content developed below is instead organized around three questions: when can equality survive after optimization over a knot type, for which size functionals can the compression term be approximated polygonally, and how do the resulting compression terms compare with existing invariants such as distortion and trunk?  In this form, the framework is meant to isolate a missing complementary invariant to density: the ratio between spatial spread and thickness.

The distinction between density and compression is useful.  A density such as $\len(\gamma)/D(\gamma)$ records how much curve is present per unit of spatial spread.  By contrast, $D(\gamma)/\Thi(\gamma)$ records how difficult it is to place a tube of radius comparable to $\Thi(\gamma)$ inside a region of size $D(\gamma)$.  These are not the same geometric feature.  A curve may have small density because it is long but widely spread, while it may have large compression radius because its spread is large relative to its thickness.  Conversely, a highly packed curve may have large density but small enclosing scale.

After optimizing over a knot type, the exact product formula generally becomes only an inequality.  If
\[
 \rho_D(K)=\inf_{\gamma\in K}\frac{\len(\gamma)}{D(\gamma)},\qquad
 \CRad_D(K)=\inf_{\gamma\in K}\frac{D(\gamma)}{\Thi(\gamma)},
\]
then
\[
 \Rop(K)\geq \rho_D(K)\CRad_D(K).
\]
Equality would require, in a strong sense, a single representative that simultaneously optimizes ropelength, density, and compression radius.  Such simultaneous optimization is not expected in general.

A second goal is to connect the smooth and polygonal theories.  For a polygonal knot $P$, the same exact factorization holds:
\[
 \Rop(P)=\rho_D(P)\CRad_D(P).
\]
For a fixed number $n$ of edges, one obtains polygonal invariants by minimizing over embedded $n$-gons of type $K$.  Again, equality need not survive after optimization.  However, under natural convergence hypotheses for polygonal thickness and the size functional $D$, the polygonal compression radii converge to the continuous compression radius as $n\to\infty$.

The paper also records two connections to existing geometric invariants.  First, when $D$ is the minimal enclosing radius, the corresponding density gives a lower bound for distortion up to a universal constant.  Second, trunk and supertrunk may be interpreted as sectional shadows of density and compression.  This leads to natural questions asking whether large trunk or supertrunk forces lower bounds on compression radii or density.

Finally, the present framework should be read together with three companion
works.  In \cite{OzawaIdealStratum}, the ropelength-filtered spaces of thick
representatives are studied through their ideal strata, admissible components,
and merge scales.  In \cite{OzawaDiscrete}, that filtered picture is
discretized into lattice-filtered move graphs, in which components and merge
scales become finitely computable.  Verified seed-generated computations there
determine BFACF merge scales for mirror seed pairs of the amphichiral knots
$4_1$ and $6_3$ (values $32$ and $44$, from birth levels $30$ and $40$, so
mirror barriers $2$ and $4$).  Discrete counterparts of density and compression
radii are developed directly in \cite{OzawaDiscretePDensity}, on
length-filtered lattice representative sets and seed-generated finite search
spaces.  The continuous invariants introduced below provide a natural reference
point for comparing those finite profiles with sublevel filtrations in smooth
knot spaces; see Section~\ref{sec:questions}.

The paper is organized as follows.  Section~\ref{sec:size} introduces admissible size functionals.  Section~\ref{sec:density-compression} defines density, compression radius, and packing ratio, and proves the ropelength decomposition and its optimized inequality.  Section~\ref{sec:examples} discusses concrete choices of $D$.  Section~\ref{sec:lp} gives $L^p$-radial and pairwise $L^p$ versions.  Section~\ref{sec:polygonal} treats polygonal approximation.  Section~\ref{sec:distortion-trunk} discusses distortion, trunk, and supertrunk.  Section~\ref{sec:questions} lists questions and conjectures.

%==================================================
\section{Size functionals for embedded curves}\label{sec:size}
%==================================================

Let $\Hg(K)$ denote the class of admissible representatives of a knot type $K$.  For the smooth theory, one may take $\Hg(K)$ to be the set of $C^{1,1}$ embedded closed curves in $\RR^3$ representing $K$ and having positive thickness.  The $C^{1,1}$ condition is natural in ropelength theory because thickness controls curvature in this regularity class.
Unless otherwise stated, convergence of smooth representatives means convergence in \(C^1\) together with convergence of arclength parametrizations and the relevant thickness semicontinuity.  In the polygonal section, convergence is understood after arclength parametrization and, when needed, after subdivision so that the number of edges tends to infinity.

\begin{definition}[Geometric size functional]
A geometric size functional is a map $D$ assigning to each admissible embedded closed curve $\gamma\subset\RR^3$ a real number $D(\gamma)$, satisfying the following properties:
\begin{enumerate}
 \item $D(g\gamma)=D(\gamma)$ for every Euclidean isometry $g$ of $\RR^3$;
 \item $D(a\gamma)=aD(\gamma)$ for every scale factor $a>0$;
 \item $D(\gamma)>0$ for every admissible closed curve $\gamma$.
\end{enumerate}
If, in addition, $D$ behaves continuously under the chosen topology on curves, we call $D$ a continuous geometric size functional.
\end{definition}

The scale-covariance condition is the key point.  It ensures that ratios such as $\len(\gamma)/D(\gamma)$ and $D(\gamma)/\Thi(\gamma)$ are scale-invariant.

\begin{example}[Diameter]
The diameter
\[
 \diam(\gamma)=\sup_{x,y\in\gamma}|x-y|
\]
is a geometric size functional.
\end{example}

\begin{example}[Minimal enclosing radius]
The minimal enclosing radius is
\[
 R_{\min}(\gamma)=\inf_{a\in\RR^3}\sup_{x\in\gamma}|x-a|.
\]
Equivalently, $R_{\min}(\gamma)$ is the radius of the smallest closed Euclidean ball containing $\gamma$.  It satisfies
\[
 \frac12\diam(\gamma)\leq R_{\min}(\gamma)\leq \diam(\gamma).
\]
\end{example}

\begin{example}[Convex-hull size]
If \(\Conv(\gamma)\) denotes the convex hull of \(\gamma\), then
\[
 D_{\mathrm{conv}}(\gamma)=\Vol(\Conv(\gamma))^{1/3}
\]
is scale-covariant whenever the convex hull has positive volume.  This functional is not positive on all admissible curves: a planar representative has convex-hull volume zero.  A safe regularized version is
\[
 D_{\mathrm{conv},\eta}(\gamma)=
 \left(\Vol(\Conv(\gamma))+\eta\diam(\gamma)^3\right)^{1/3},
 \qquad \eta>0.
\]
It is Euclidean-invariant, scale-covariant, and positive, since every embedded closed curve has positive diameter and hence
\[
 D_{\mathrm{conv},\eta}(\gamma)\geq \eta^{1/3}\diam(\gamma)>0.
\]
\end{example}

\begin{example}[Radius of gyration]
Let $\bar x=(1/\len(\gamma))\int_\gamma x\,ds$ be the arclength barycenter.  The radius of gyration is
\[
 R_g(\gamma)=\left(\frac{1}{\len(\gamma)}\int_\gamma |x-\bar x|^2\,ds\right)^{1/2}.
\]
This is the $L^2$-radial size of the curve and is widely used in polymer geometry.
\end{example}

%==================================================
\section{Density, compression radius, and packing ratio}\label{sec:density-compression}
%==================================================

Fix a geometric size functional $D$.

\begin{definition}[$D$-density]
For an admissible representative $\gamma$, define the $D$-density by
\[
 \rho_D(\gamma)=\frac{\len(\gamma)}{D(\gamma)}.
\]
For a knot type $K$, define
\[
 \rho_D(K)=\inf_{\gamma\in\Hg(K)}\rho_D(\gamma)
 =\inf_{\gamma\in\Hg(K)}\frac{\len(\gamma)}{D(\gamma)}.
\]
\end{definition}

\begin{definition}[Scale-free compression radius]
For an admissible representative $\gamma$ of positive thickness, define the $D$-compression radius by
\[
 \CRad_D(\gamma)=\frac{D(\gamma)}{\Thi(\gamma)}.
\]
For a knot type $K$, define
\[
 \CRad_D(K)=\inf_{\gamma\in\Hg(K)}\CRad_D(\gamma)
 =\inf_{\gamma\in\Hg(K)}\frac{D(\gamma)}{\Thi(\gamma)}.
\]
\end{definition}

\begin{definition}[Packing ratio]
The $D$-packing ratio of a representative is
\[
 \Pack_D(\gamma)=\frac{\Thi(\gamma)}{D(\gamma)}.
\]
The $D$-packing ratio of a knot type is
\[
 \Pack_D(K)=\sup_{\gamma\in\Hg(K)}\Pack_D(\gamma).
\]
Whenever $\CRad_D(K)>0$, one has
\[
 \Pack_D(K)=\frac{1}{\CRad_D(K)}.
\]
\end{definition}

\begin{remark}
The terminology is chosen so that smaller $\CRad_D$ means better compression, while larger $\Pack_D$ means better packing.  These are inverse viewpoints on the same scale-free phenomenon.
\end{remark}

\begin{lemma}[Representative-level ropelength decomposition]\label{prop:representative-decomposition}
For every admissible representative \(\gamma\) of positive thickness,
\[
 \Rop(\gamma)=\rho_D(\gamma)\CRad_D(\gamma).
\]
Equivalently,
\[
 \frac{\len(\gamma)}{\Thi(\gamma)}
 =
 \frac{\len(\gamma)}{D(\gamma)}\cdot
 \frac{D(\gamma)}{\Thi(\gamma)}.
\]
\end{lemma}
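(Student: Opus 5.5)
The plan is to verify directly from the definitions that the three quantities involved are finite positive real numbers, and then cancel. First, I would check that each factor is well defined. For an admissible representative \(\gamma\), \(\len(\gamma)\) is finite and positive, because \(\gamma\) is a rectifiable embedded closed curve and hence not a single point. The hypothesis gives \(\Thi(\gamma)>0\), and \(\Thi(\gamma)<\infty\) since thickness of a closed curve is bounded above by its curvature radii (alternatively, by \(\diam(\gamma)\)). Property (3) in the definition of a geometric size functional gives \(D(\gamma)>0\), and \(D\) is real-valued by definition, so \(D(\gamma)\in(0,\infty)\).

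Second, with all three numbers in \((0,\infty)\), the quotients \(\rho_D(\gamma)=\len(\gamma)/D(\gamma)\) and \(\CRad_D(\gamma)=D(\gamma)/\Thi(\gamma)\) are finite positive reals. Their product is then computed in the field \(\RR\):
\[
 \rho_D(\gamma)\CRad_D(\gamma)=\frac{\len(\gamma)}{D(\gamma)}\cdot\frac{D(\gamma)}{\Thi(\gamma)}=\frac{\len(\gamma)}{\Thi(\gamma)}=\Rop(\gamma),
\]
where the middle equality cancels the nonzero factor \(D(\gamma)\).

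The only point requiring care, which I expect to be the sole (mild) obstacle, is excluding degenerate values of the form \(0\cdot\infty\) or \(\infty/\infty\). This is exactly why the lemma assumes positive thickness and why positivity is built into the definition of a size functional. For instance, the unregularized \(D_{\mathrm{conv}}\) would fail on planar curves, whereas \(D_{\mathrm{conv},\eta}\) does not. Scale covariance and Euclidean invariance are not needed for the identity itself. They only guarantee that both factors are scale-invariant, which becomes relevant when optimizing over \(\Hg(K)\) later.
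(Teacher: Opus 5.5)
Your proposal is correct and is the same argument as the paper's, whose proof simply cancels \(D(\gamma)\). Your extra check that \(\len(\gamma)\), \(D(\gamma)\), and \(\Thi(\gamma)\) all lie in \((0,\infty)\), so that the cancellation is legitimate, makes explicit what the paper leaves implicit.
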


\begin{proof}
This is immediate by cancellation of \(D(\gamma)\).
\end{proof}

\begin{lemma}[Optimized ropelength inequality]\label{lem:optimized-inequality}
For every knot type \(K\) and every geometric size functional \(D\),
\[
 \Rop(K)\geq \rho_D(K)\CRad_D(K).
\]
\end{lemma}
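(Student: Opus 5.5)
The plan is to combine the representative-level identity of Lemma~\ref{prop:representative-decomposition} with the definitions of the two infima. Fix a knot type \(K\) and an arbitrary admissible representative \(\gamma\in\Hg(K)\) of positive thickness. All three quantities \(\len(\gamma)\), \(D(\gamma)\), \(\Thi(\gamma)\) are positive, so \(\rho_D(\gamma)>0\) and \(\CRad_D(\gamma)>0\). By definition of the infima,
\[
 \rho_D(\gamma)\geq \rho_D(K)\geq 0,\qquad \CRad_D(\gamma)\geq \CRad_D(K)\geq 0.
\]
Multiplying two inequalities between nonnegative reals is monotone, so
\[
 \Rop(\gamma)=\rho_D(\gamma)\CRad_D(\gamma)\geq \rho_D(K)\CRad_D(K).
\]
The right-hand side does not depend on \(\gamma\), so I will take the infimum over \(\gamma\in\Hg(K)\) on the left. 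This gives \(\Rop(K)\geq\rho_D(K)\CRad_D(K)\).

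The only point needing care is the degenerate cases, and this is the "main obstacle", although a minor one. First, nonnegativity of both infima is what makes the product-of-inequalities step legitimate. It follows from positivity of \(D\), \(\len\) and \(\Thi\), so no negative factor can reverse an inequality. Second, if \(\Hg(K)\) were empty, all infima would be \(+\infty\). I will note that \(\Hg(K)\neq\emptyset\) for every knot type, since every knot type has a smooth, hence \(C^{1,1}\), representative of positive thickness. In particular both \(\rho_D(K)\) and \(\CRad_D(K)\) are finite, so no convention for \(0\cdot\infty\) is needed.

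I will also remark that the argument uses only the fact that \(\Rop(K)\) is the infimum over the same class \(\Hg(K)\). No continuity of \(D\) or existence of minimizers is required. Equality would require a single sequence that is simultaneously minimizing for all three functionals.
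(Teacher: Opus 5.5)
Your proof is correct and follows the paper's argument: apply the representative-level identity, bound each factor below by its infimum, and take the infimum over \(\gamma\in\Hg(K)\). Your remarks on the nonnegativity of the infima and the nonemptiness of \(\Hg(K)\) make explicit what the paper's two-line argument leaves implicit.
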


\begin{proof}
For every \(\gamma\in\Hg(K)\), Lemma~\ref{prop:representative-decomposition} gives
\[
 \Rop(\gamma)=\rho_D(\gamma)\CRad_D(\gamma)
 \geq \rho_D(K)\CRad_D(K).
\]
Taking the infimum over \(\gamma\in\Hg(K)\) gives the result.
\end{proof}

\begin{proposition}[Equality criterion for optimized decomposition]\label{prop:equality-criterion}
Assume that \(\rho_D(K)>0\) and \(\CRad_D(K)>0\).  Then
\[
 \Rop(K)=\rho_D(K)\CRad_D(K)
\]
holds if and only if there exists a sequence \(\gamma_j\in\Hg(K)\) such that
\[
 \Rop(\gamma_j)\to\Rop(K),\qquad
 \rho_D(\gamma_j)\to\rho_D(K),\qquad
 \CRad_D(\gamma_j)\to\CRad_D(K).
\]
In particular, if all three infima are attained, equality holds if and only if a common minimizer exists.
\end{proposition}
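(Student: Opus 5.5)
The plan is to prove the two directions of the equivalence separately, relying only on the representative-level identity of Lemma~\ref{prop:representative-decomposition}, the inequality of Lemma~\ref{lem:optimized-inequality}, and the definition of the three infima. Before starting, I would note that the hypotheses \(\rho_D(K)>0\) and \(\CRad_D(K)>0\) make the product \(\rho_D(K)\CRad_D(K)\) a positive real number. I would also tacitly assume \(\Rop(K)<\infty\), which holds since \(\Hg(K)\neq\emptyset\).

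For the \emph{``if''} direction, take a sequence \(\gamma_j\) as in the statement. Lemma~\ref{prop:representative-decomposition} gives \(\Rop(\gamma_j)=\rho_D(\gamma_j)\CRad_D(\gamma_j)\). Since both factors converge to finite limits, the right-hand side converges to \(\rho_D(K)\CRad_D(K)\), while the left-hand side converges to \(\Rop(K)\). Uniqueness of limits then gives the equality. Positivity is not even needed here.

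For the \emph{``only if''} direction, assume equality and choose a minimizing sequence \(\gamma_j\in\Hg(K)\) with \(\Rop(\gamma_j)\to\Rop(K)\). Write \(a_j=\rho_D(\gamma_j)\geq \rho_D(K)=:a\) and \(b_j=\CRad_D(\gamma_j)\geq\CRad_D(K)=:b\), so that \(a_jb_j\to ab\). The key step, and the only place where care is needed, is to show that \(a_j\to a\) and \(b_j\to b\) separately. The natural estimate is
\[
 a_jb_j-ab=(a_j-a)b_j+a(b_j-b)\geq (a_j-a)b+a(b_j-b).
\]
Both terms on the right are nonnegative, and the left side tends to \(0\). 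Since \(a,b>0\), this forces \(a_j-a\to 0\) and \(b_j-b\to0\). Here positivity is essential: if, say, \(a=0\), then \(b_j\) could stay bounded away from \(b\). This is the step I expect to be the main point of the argument, though it is elementary.

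For the final assertion, suppose the infima are attained. If a common minimizer \(\gamma\) exists, the constant sequence \(\gamma_j=\gamma\) satisfies the criterion, so equality holds (equivalently, directly by Lemma~\ref{prop:representative-decomposition}). Conversely, let \(\gamma\) be a ropelength minimizer. Equality gives \(\rho_D(\gamma)\CRad_D(\gamma)=\rho_D(K)\CRad_D(K)\), with each factor at least its infimum. The same estimate as above, applied to the constant sequence, gives \(\rho_D(\gamma)=\rho_D(K)\) and \(\CRad_D(\gamma)=\CRad_D(K)\). Hence \(\gamma\) is a common minimizer. In fact, every ropelength minimizer is then a common minimizer, and only attainment of \(\Rop(K)\) is needed for this direction.
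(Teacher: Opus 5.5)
Your proof is correct and follows essentially the same route as the paper. The ``if'' direction passes to the limit in the representative-level identity, and the ``only if'' direction takes a ropelength-minimizing sequence and uses the positive lower bounds on both factors. Your explicit estimate $a_jb_j-ab\geq(a_j-a)b+a(b_j-b)$ makes rigorous the paper's terse final step, and your constant-sequence argument for the ``in particular'' clause (needing only attainment of $\Rop(K)$) spells out what the paper leaves implicit.
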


\begin{proof}
If such a sequence exists, then
\[
 \Rop(K)\leq\lim_j\Rop(\gamma_j)
 =
 \lim_j\rho_D(\gamma_j)\CRad_D(\gamma_j)
 =
 \rho_D(K)\CRad_D(K),
\]
and the reverse inequality is Lemma~\ref{lem:optimized-inequality}.  Conversely, suppose equality holds and choose a minimizing sequence for ropelength, so that \(\Rop(\gamma_j)\to\Rop(K)\).  By the representative-level decomposition,
\[
 \rho_D(\gamma_j)\CRad_D(\gamma_j)=\Rop(\gamma_j)\to\Rop(K)=\rho_D(K)\CRad_D(K).
\]
In particular, the products are bounded.  Since
\[
 \rho_D(\gamma_j)\geq\rho_D(K)>0,\qquad
 \CRad_D(\gamma_j)\geq\CRad_D(K)>0,
\]
this boundedness rules out divergence of either factor.  The convergence of the product to the product of the two positive lower bounds then implies that both factors tend to their respective infima.
\end{proof}

\begin{remark}[Why equality is not automatic]
The equality
\[
 \Rop(K)=\rho_D(K)\CRad_D(K)
\]
requires the minimization of a product to agree with the product of two independently minimized factors.  Proposition~\ref{prop:equality-criterion} shows that this is equivalent, up to minimizing sequences, to simultaneous optimization of ropelength, density, and compression radius.
\end{remark}

\begin{example}[The unknot]
Let \(U\) be the unknot and let \(\gamma\) be a round circle of radius \(r\).  Then
\[
 \len(\gamma)=2\pi r,
 \qquad
 \Thi(\gamma)=r,
 \qquad
 R_{\min}(\gamma)=r,
 \qquad
 \diam(\gamma)=2r.
\]
For \(D=R_{\min}\) one obtains
\[
 \rho_R(U)=2\pi,
 \qquad
 \CRad_R(U)=1,
 \qquad
 \Rop(U)=2\pi,
\]
so equality holds.  For \(D=\diam\) one obtains
\[
 \rho_{\diam}(U)=\pi,
 \qquad
 \CRad_{\diam}(U)=2,
 \qquad
 \Rop(U)=2\pi,
\]
again with equality.  Thus the optimized product can be sharp, even though equality is not expected without a common minimizing sequence.
\end{example}

\begin{example}[Strictness for an artificial size functional]\label{ex:artificial-strictness}
The optimized inequality can be strict.  This can be seen without relying on any exact ropelength computation for a nontrivial knot.  Define
\[
 D_*(\gamma)=\Thi(\gamma)\left(1+e^{-\Rop(\gamma)}\right).
\]
This is Euclidean-invariant, scale-covariant, and positive, although it is not a natural spatial size functional.  For each representative,
\[
 \rho_{D_*}(\gamma)=\frac{\Rop(\gamma)}{1+e^{-\Rop(\gamma)}},\qquad
 \CRad_{D_*}(\gamma)=1+e^{-\Rop(\gamma)}.
\]
The function $x/(1+e^{-x})$ is strictly increasing for $x>0$, and every knot type admits representatives with arbitrarily large ropelength.  Indeed, starting from any smooth representative, one may insert inside a small ball a Reidemeister-I kink of scale \(\varepsilon\); this preserves the ambient isotopy type, while the thickness is at most of order \(\varepsilon\) and the length remains bounded below, so the ropelength tends to infinity as \(\varepsilon\to0\).  Hence
\[
 \rho_{D_*}(K)=\frac{\Rop(K)}{1+e^{-\Rop(K)}},
 \qquad
 \CRad_{D_*}(K)=1.
\]
Therefore
\[
 \rho_{D_*}(K)\CRad_{D_*}(K)
 =
 \frac{\Rop(K)}{1+e^{-\Rop(K)}}
 <
 \Rop(K).
\]
This example is included only to show that equality after optimization is a genuine condition, not a formal consequence of the representative-level identity.  For natural choices such as $D=\diam$ or $D=R_{\min}$, proving strictness for a specific nontrivial knot would require detailed information about the corresponding density and compression minimizers, which is not currently available.
\end{example}

\begin{remark}[Usefulness and limitation of the basic inequality]
The inequality
\[
 \Rop(K)\geq \rho_D(K)\CRad_D(K)
\]
becomes a useful lower bound only when both factors admit independent effective lower bounds.  For example, for \(D=\diam\) every closed curve satisfies \(\len(\gamma)\geq 2\diam(\gamma)\), hence \(\rho_{\diam}(K)\geq 2\) and
\[
 \Rop(K)\geq 2\CRad_{\diam}(K).
\]
This universal estimate is not intended to improve the best known ropelength bounds for standard knots.  Rather, it shows where new information would have to enter: one needs either a sharp density lower bound or a sharp compression lower bound.  The later questions on trunk and supertrunk are motivated by this need.
\end{remark}

\begin{definition}[Optimal strata associated with $D$]
When the relevant infima are attained, define
\[
 \mathcal I_{\rho_D}(K)=\left\{\gamma\in\Hg(K)\mid \rho_D(\gamma)=\rho_D(K)\right\},
\]
\[
 \mathcal I_{\CRad_D}(K)=\left\{\gamma\in\Hg(K)\mid \CRad_D(\gamma)=\CRad_D(K)\right\},
\]
and
\[
 \mathcal I_{\Rop}(K)=\left\{\gamma\in\Hg(K)\mid \Rop(\gamma)=\Rop(K)\right\}.
\]
We call these the density ideal stratum, compression ideal stratum, and ropelength ideal stratum, respectively.
\end{definition}

\begin{remark}
The compression ideal stratum should not be identified with the usual ideal knot stratum unless the corresponding minimizers coincide.  Thus a knot type may possess several different ideal strata, each associated with a different geometric functional.  The ropelength ideal stratum $\mathcal I_{\Rop}(K)$ is the initial layer of the ropelength filtration studied in \cite{OzawaIdealStratum}.  In the same way, each of the functionals $\rho_D$ and $\CRad_D$ defines its own sublevel filtration of $\Hg(K)$, with its own initial layer, admissible components, and merge scales; these filtered variants are taken up in Section~\ref{sec:questions}.
\end{remark}

%==================================================
\section{Basic examples and comparisons}\label{sec:examples}
%==================================================

\subsection{Diameter and minimal enclosing radius}

Let $D=\diam$ or $D=R_{\min}$.  Since
\[
 \frac12\diam(\gamma)\leq R_{\min}(\gamma)\leq \diam(\gamma),
\]
the corresponding densities and compression radii are equivalent up to universal constants.  More precisely,
\[
 \frac{\len(\gamma)}{\diam(\gamma)}
 \leq
 \frac{\len(\gamma)}{R_{\min}(\gamma)}
 \leq
 2\frac{\len(\gamma)}{\diam(\gamma)},
\]
and
\[
 \frac12\frac{\diam(\gamma)}{\Thi(\gamma)}
 \leq
 \frac{R_{\min}(\gamma)}{\Thi(\gamma)}
 \leq
 \frac{\diam(\gamma)}{\Thi(\gamma)}.
\]
Thus the diameter theory and the minimal-enclosing-ball theory measure the same coarse phenomenon, but the latter is better adapted to the language of compression and storage.

\begin{figure}[t]
\centering
\begin{tikzpicture}[scale=1.0]
\coordinate (c) at (0,0);
\draw[dashed, red!70] (c) circle (2.05);
\fill (c) circle (1.2pt) node[below right] {$a$};
\draw[thick, blue] plot[smooth cycle, tension=.88] coordinates {(-1.55,-0.35) (-1.10,1.15) (0.05,1.45) (1.20,0.70) (1.35,-0.85) (0.10,-1.45) (-1.05,-1.00)};
\draw[thick] (-1.55,-0.35) -- (1.20,0.70);
\node[above] at (-0.15,0.35) {$\diam(\gamma)$};
\draw[->, thick, red!70] (0,0) -- (1.35,1.40);
\node[anchor=west, red!70!black] at (1.60,1.62) {$R_{\min}(\gamma)$};
\end{tikzpicture}
\caption{Two basic size functionals. The dashed circle represents a minimal enclosing ball of radius $R_{\min}(\gamma)$ centered at $a$, while the segment indicates the diameter scale $\diam(\gamma)$.}
\label{fig:diameter-enclosing-radius}
\end{figure}
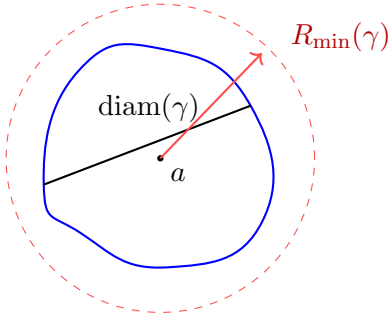

\subsection{Convex-hull compression}

The convex-hull size $D_{\mathrm{conv}}(\gamma)=\Vol(\Conv(\gamma))^{1/3}$ is sensitive to the actual three-dimensional occupation of the curve.  It may distinguish a nearly planar long curve from a curve that fills a genuinely three-dimensional region.  Degeneracy of the convex hull must be controlled.  One possible regularized choice is
\[
 D_{\mathrm{conv},\eta}(\gamma)=
 \left(\Vol(\Conv(\gamma))+\eta\diam(\gamma)^3\right)^{1/3},
 \qquad \eta>0.
\]
This regularized size is positive and scale-covariant.

\subsection{Interpretation of compression}

If $D=R_{\min}$, the quantity
\[
 \CRad_R(\gamma)=\frac{R_{\min}(\gamma)}{\Thi(\gamma)}
\]
measures the radius of the smallest enclosing ball in units of tube thickness.  The inverse
\[
 \Pack_R(\gamma)=\frac{\Thi(\gamma)}{R_{\min}(\gamma)}
\]
measures how large the tube thickness is relative to the enclosing radius.  Thus a large packing ratio means that the representative is efficiently stored in a small ball relative to its tube radius.

%==================================================
\section{Ropelength benchmarks and consequences for the decomposition}\label{sec:ropelength-benchmarks}
%==================================================

We record several known ropelength estimates in order to clarify what the density-compression framework can and cannot presently prove.  Some of the most recent results cited below, notably the works of Klotz--Thompson and Klotz on torus links, are currently available as arXiv preprints; they are used here as benchmarks and motivation rather than as ingredients in the proofs.  Buck--Simon is cited for the foundational relationship between thickness and crossing number, while Diao--Ernst--Por--Ziegler provides the quasilinear upper-bound side of the general ropelength picture.  The statements in this section are not new; they are included to position the proposed invariants relative to established ropelength theory.

\subsection{General lower and upper bounds}

For arbitrary nontrivial knots, Denne--Diao--Sullivan used quadrisecants to prove
\[
 \Rop(K)\geq 15.66.
\]
This improved earlier universal lower bounds and is close to numerical upper estimates for the trefoil; see also the computational tightening work of Ashton--Cantarella--Piatek--Rawdon \cite{AshtonCantarellaPiatekRawdon}.  In terms of crossing number, there is a universal lower bound of the form
\[
 \Rop(K)\geq \alpha_0\Cr(K)^{3/4}
\]
for a positive constant \(\alpha_0\); the currently established universal constants are far from the values suggested by numerical and constructive examples.  On the other hand, Diao--Ernst--Por--Ziegler proved that there exists a universal constant \(a>0\) such that
\[
 \Rop(K)\leq a\Cr(K)\log^5\Cr(K)
\]
for every knot or link \(K\).  Thus ropelength is known to be at most quasilinear in crossing number.

In the language of the present paper, any proof of a ropelength lower bound through a size functional \(D\) would have to establish
\[
 \rho_D(K)\CRad_D(K)\geq F(K)
\]
for the desired lower-bound function \(F(K)\).  The optimized inequality then gives \(\Rop(K)\geq F(K)\).  Conversely, an existing lower bound for \(\Rop(K)\) alone does not imply a lower bound for \(\rho_D(K)\CRad_D(K)\), since the optimized product may be strictly smaller than ropelength.

The universal estimates can be translated into targets for the present decomposition.  For instance, if $D=\diam$, then every closed curve satisfies $\len(\gamma)\geq 2\diam(\gamma)$, so $\rho_{\diam}(K)\geq2$.  Thus any lower bound of the form
\[
 \CRad_{\diam}(K)\geq c\Cr(K)^{3/4}
\]
would imply a ropelength lower bound of the same order, with constant $2c$.  Similarly, an alternating-knot lower bound would follow from a compression estimate of order $\Cr(K)$, or from a mixed estimate in which both the density and the compression factors grow.  The point is not that such estimates are known here, but that known ropelength bounds specify what one should try to prove for the two factors separately.

\subsection{Alternating knots}

For alternating knots, Diao proved the long-standing conjecture that ropelength is bounded below linearly by crossing number.  More precisely, there is a constant \(b_0>1/56\) such that
\[
 \Rop(K)>b_0\Cr(K)
\]
for every alternating knot \(K\).  Therefore, if one hopes to recover the alternating-knot lower bound by the present decomposition, one needs a mechanism forcing
\[
 \rho_D(K)\CRad_D(K)\gtrsim \Cr(K)
\]
for alternating knots.  This suggests that alternating knots should have either large density, large compression radius, or both, for suitable choices of \(D\).  Thus the alternating ropelength phenomenon may be reformulated as a question about how complexity is distributed between length density and thickness-normalized spatial spread.

For the alternating torus knots and links of type \(T(2,n)\), upper-bound constructions are closely related to double-helical and superhelical patterns.  The classical double-helix constructions give linear upper bounds in the crossing number.  Later constructions using superhelices and non-identical double helices have reduced the coefficient \cite{HuhKimOh2018,KlotzMaldonado2021,KimOhHuh2024}.  These constructions are useful test cases for the invariants \(\rho_D\) and \(\CRad_D\), because their geometry is explicit enough that one can try to estimate the two factors separately.

\subsection{Non-alternating torus knots}

Non-alternating torus knots display a different asymptotic behavior.  The ropelength of many such families grows like the \(3/4\)-power of crossing number rather than linearly.  This behavior was already visible in earlier lattice and helical constructions, and recent work of Klotz--Thompson using concentric helices gives efficient configurations for non-alternating torus knots, with an optimized length on the order of \(Q^{3/2}\) for a family built from \(Q\) helices.  Since the crossing number in these families is quadratic in the relevant parameter, this corresponds to the \(\Cr^{3/4}\)-scale.

This distinction is important for the present paper.  If a single size functional \(D\) is used for both alternating and non-alternating torus families, then the product
\[
 \rho_D(K)\CRad_D(K)
\]
must be flexible enough to reflect both the linear alternating regime and the \(3/4\)-power non-alternating regime.  Thus torus knots provide a natural laboratory for testing whether density or compression is the dominant contributor to ropelength growth.

\subsection{Torus links and close packing}

For torus links \(T(Q,Q)\), Klotz recently obtained strong lower bounds using the convex hull of close-packed unit disks \cite{KlotzTQQ}.  In particular, the coefficient in the \(\Cr^{3/4}\)-type lower bound is much larger for these torus links than the best currently known universal coefficient.  This result is especially relevant to the present framework because it uses a packing and convex-hull mechanism.  It suggests that convex-hull size, enclosing radius, and compression radius are not merely auxiliary quantities, but can encode part of the geometry responsible for sharp ropelength lower bounds.

\begin{remark}[Role of the present invariants]
The preceding results show that the present paper should not claim to improve existing ropelength bounds merely from the formal product decomposition.  Rather, the contribution is to isolate two factors whose separate behavior can be studied.  The known ropelength estimates give target scales.  For example, one may ask whether for alternating knots some natural \(D\) satisfies
\[
 \rho_D(K)\CRad_D(K)\geq c\Cr(K),
\]
whereas for non-alternating torus knots one may ask whether the same or a different \(D\) yields the scale
\[
 \rho_D(K)\CRad_D(K)\asymp \Cr(K)^{3/4}.
\]
These are not consequences of the present formalism; they are concrete problems suggested by it.
\end{remark}

%==================================================
\section{$L^p$-radial compression and pairwise $p$-densities}\label{sec:lp}
%==================================================

The preceding definitions apply to any scale-covariant size functional.  Two natural $L^p$ families are particularly useful: radial sizes and pairwise spreads.

\subsection{$L^p$-radial size}

\begin{definition}[$L^p$-radial size]
For $1\leq p<\infty$, define
\[
 R_p(\gamma)=
 \inf_{a\in\RR^3}
 \left(
 \frac{1}{\len(\gamma)}\int_\gamma |x-a|^p\,ds
 \right)^{1/p}.
\]
For $p=\infty$, define
\[
 R_\infty(\gamma)=\inf_{a\in\RR^3}\sup_{x\in\gamma}|x-a|=R_{\min}(\gamma).
\]
\end{definition}

The associated quantities are
\[
 \rho^{\mathrm{rad}}_p(\gamma)=\frac{\len(\gamma)}{R_p(\gamma)},
 \qquad
 \CRad^{\mathrm{rad}}_p(\gamma)=\frac{R_p(\gamma)}{\Thi(\gamma)},
\]
and similarly for a knot type $K$ by taking infima.

\begin{proposition}[$L^p$ radial ropelength decomposition]
For every representative $\gamma$ of positive thickness and every $1\leq p\leq\infty$,
\[
 \Rop(\gamma)
 =
 \rho^{\mathrm{rad}}_p(\gamma)\CRad^{\mathrm{rad}}_p(\gamma).
\]
Consequently,
\[
 \Rop(K)
 \geq
 \rho^{\mathrm{rad}}_p(K)\CRad^{\mathrm{rad}}_p(K).
\]
\end{proposition}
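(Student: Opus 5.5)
The plan is to reduce the statement to Lemma~\ref{prop:representative-decomposition} and Lemma~\ref{lem:optimized-inequality} applied with $D=R_p$. Those lemmas hold for any geometric size functional, so the only real work is to check that $R_p$ is one. That is, I would verify Euclidean invariance, scale covariance, and strict positivity of $R_p(\gamma)$ on every admissible curve. Once this is done, the representative identity $\Rop(\gamma)=\rho^{\mathrm{rad}}_p(\gamma)\CRad^{\mathrm{rad}}_p(\gamma)$ is the cancellation of $R_p(\gamma)$. The optimized inequality then follows exactly as in Lemma~\ref{lem:optimized-inequality}: each factor is bounded below by its infimum over $\Hg(K)$, and we take the infimum over $\gamma$.

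For invariance, I would let $g(x)=Ax+b$ be an isometry. Arclength is preserved, and $|g(x)-a|=|x-g^{-1}(a)|$. Since $a\mapsto g^{-1}(a)$ is a bijection of $\RR^3$, the infimum over $a$ is unchanged. For scaling by $\lambda>0$, we have $ds\mapsto\lambda\,ds$ and $\len\mapsto\lambda\len$, so the normalized average $\frac{1}{\len}\int|x-a|^p\,ds$ picks up exactly $\lambda^p$ after substituting $a=\lambda a'$. Taking $p$-th roots gives the factor $\lambda$. The case $p=\infty$ is $R_{\min}$, which is already covered by Section~\ref{sec:size}.

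The step needing an actual argument is positivity for finite $p$. The function $a\mapsto \big(\frac1{\len}\int_\gamma|x-a|^p\,ds\big)^{1/p}$ is continuous and tends to infinity as $|a|\to\infty$, so the infimum is attained at some $a_0$. If $R_p(\gamma)=0$, then $|x-a_0|=0$ for $ds$-almost every point of $\gamma$. By continuity, every point of $\gamma$ would then equal $a_0$, which contradicts the fact that an embedded closed curve has positive length and positive diameter. Hence $R_p(\gamma)>0$, and the quotients $\rho^{\mathrm{rad}}_p$ and $\CRad^{\mathrm{rad}}_p$ are well defined and finite whenever $\Thi(\gamma)>0$. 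This is the main (mild) obstacle. Everything else is formal. No equality is claimed after optimization, so nothing like Proposition~\ref{prop:equality-criterion} is needed.
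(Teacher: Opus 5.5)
Your proposal is correct and follows the paper's proof, which applies the representative-level decomposition (Lemma~\ref{prop:representative-decomposition}) with $D=R_p$ and then takes infima as in Lemma~\ref{lem:optimized-inequality}. Your explicit check that $R_p$ is Euclidean-invariant, scale-covariant and strictly positive, via attainment of the infimum and the continuity argument, is valid and fills in a step the paper leaves implicit.
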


\begin{proof}
This is Proposition~\ref{prop:representative-decomposition} applied to $D=R_p$.
\end{proof}

\begin{proposition}[The case $p=2$]
For $p=2$, the minimizing center in the definition of $R_2(\gamma)$ is the arclength barycenter $\bar x$, and
\[
 R_2(\gamma)=R_g(\gamma).
\]
\end{proposition}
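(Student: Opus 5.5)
The plan is to expand the quadratic mean-square functional around the barycenter and read off the minimizer directly. Fix $\gamma$ with $L=\len(\gamma)>0$, and for $a\in\RR^3$ put
\[
 F(a)=\frac{1}{L}\int_\gamma |x-a|^2\,ds .
\]
By definition $R_2(\gamma)=\inf_a F(a)^{1/2}$. Since $t\mapsto t^{1/2}$ is increasing on $[0,\infty)$, it suffices to minimize $F$ itself.

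The key step is the parallel-axis (König–Huygens) identity. Write $x-a=(x-\bar x)+(\bar x-a)$ and expand the square:
\[
 F(a)=\frac1L\int_\gamma|x-\bar x|^2\,ds+\frac{2}{L}\Big\langle \int_\gamma (x-\bar x)\,ds,\ \bar x-a\Big\rangle+|\bar x-a|^2 .
\]
The cross term vanishes, because $\int_\gamma (x-\bar x)\,ds=L\bar x-L\bar x=0$ by the definition $\bar x=(1/L)\int_\gamma x\,ds$. Hence
\[
 F(a)=R_g(\gamma)^2+|a-\bar x|^2 .
\]

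From this identity both claims follow at once. The infimum of $F$ is attained uniquely at $a=\bar x$, which identifies the minimizing center as the arclength barycenter. The minimum value is $R_g(\gamma)^2$, and taking square roots gives $R_2(\gamma)=R_g(\gamma)$.

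The only point requiring care is that all integrals are finite, so that the expansion is legitimate. This holds because an admissible $\gamma$ is a compact rectifiable curve: the coordinate functions are bounded and $L<\infty$. There is no real obstacle beyond checking this.
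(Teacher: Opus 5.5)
Your proof is correct and matches the paper's argument: both rest on the parallel-axis identity $\int_\gamma|x-a|^2\,ds=\int_\gamma|x-\bar x|^2\,ds+\len(\gamma)|a-\bar x|^2$ and read off that the minimum is attained at $a=\bar x$. You also spell out why the cross term vanishes and note that the minimizing center is unique, both of which the paper leaves implicit.
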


\begin{proof}
Expanding $\int_\gamma |x-a|^2\,ds$ gives
\[
 \int_\gamma |x-a|^2\,ds
 =
 \int_\gamma |x-\bar x|^2\,ds
 +\len(\gamma)|a-\bar x|^2.
\]
Thus the minimum is attained at $a=\bar x$.
\end{proof}

\subsection{Pairwise $L^p$-spread}

For comparison with unified $p$-densities, one may use pairwise distance averages.

\begin{definition}[Pairwise $L^p$-spread]
For $p\in(0,\infty)$, define
\[
 \sD_p(\gamma)=
 \left(
 \frac{1}{\len(\gamma)^2}
 \int_\gamma\int_\gamma |x-y|^p\,ds_x\,ds_y
 \right)^{1/p}.
\]
For $p=\infty$, set $\sD_\infty(\gamma)=\diam(\gamma)$.
\end{definition}

Then one obtains pairwise density and pairwise compression radius by
\[
 \rho_p(\gamma)=\frac{\len(\gamma)}{\sD_p(\gamma)},
 \qquad
 \CRad_p(\gamma)=\frac{\sD_p(\gamma)}{\Thi(\gamma)}.
\]
Again
\[
 \Rop(\gamma)=\rho_p(\gamma)\CRad_p(\gamma).
\]

\begin{proposition}[Relation between $R_2$ and $\sD_2$]
For every admissible representative $\gamma$,
\[
 \sD_2(\gamma)=\sqrt{2}\,R_2(\gamma)=\sqrt{2}\,R_g(\gamma).
\]
\end{proposition}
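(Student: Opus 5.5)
The plan is to compute the double integral defining \(\sD_2(\gamma)^2\) directly by centering at the arclength barycenter \(\bar x=(1/\len(\gamma))\int_\gamma x\,ds\), and then to invoke the preceding proposition on the case \(p=2\), which identifies \(R_2(\gamma)\) with \(R_g(\gamma)\) and shows that the optimal center is \(\bar x\). Write \(L=\len(\gamma)\) and, for \(x,y\in\gamma\), set \(u=x-\bar x\) and \(v=y-\bar x\), so that \(x-y=u-v\) and
\[
 |x-y|^2=|u|^2+|v|^2-2\,u\cdot v.
\]

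First integrate this identity over \(\gamma\times\gamma\) against \(ds_x\,ds_y\). The first two terms each contribute \(L\int_\gamma|x-\bar x|^2\,ds\), since integrating in the variable that does not appear simply produces a factor of \(L\). The cross term factors as a product,
\[
 -2\left(\int_\gamma (x-\bar x)\,ds_x\right)\cdot\left(\int_\gamma (y-\bar x)\,ds_y\right),
\]
and it vanishes because \(\int_\gamma (x-\bar x)\,ds=L\bar x-L\bar x=0\) by the definition of the barycenter. This yields
\[
 \int_\gamma\int_\gamma |x-y|^2\,ds_x\,ds_y = 2L\int_\gamma |x-\bar x|^2\,ds .
\]

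Next divide by \(L^2\). By the definition of the radius of gyration this gives \(\sD_2(\gamma)^2=2\cdot\frac1L\int_\gamma|x-\bar x|^2\,ds=2R_g(\gamma)^2\). Taking square roots, and using \(R_2=R_g\) from the preceding proposition, gives \(\sD_2(\gamma)=\sqrt2\,R_2(\gamma)=\sqrt2\,R_g(\gamma)\).

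No step is a genuine obstacle. The only point needing care is the justification of the manipulations: since \(\gamma\) is rectifiable and compact, the integrands are bounded and continuous, so Fubini's theorem applies and the cross term genuinely factors into a product of two single integrals.
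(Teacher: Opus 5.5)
Your proposal is correct and follows essentially the same route as the paper: expand $|x-y|^2$ around the arclength barycenter $\bar x$, use $\int_\gamma (x-\bar x)\,ds=0$ to eliminate the cross term, and divide by $\len(\gamma)^2$ to obtain $\sD_2(\gamma)^2=2R_g(\gamma)^2$. Your explicit appeal to the preceding proposition for $R_2=R_g$, and your Fubini remark, simply spell out steps the paper leaves implicit.
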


\begin{proof}
Writing all integrals with respect to arclength, we compute
\begin{align*}
 &\frac{1}{\len(\gamma)^2}\int_\gamma\int_\gamma |x-y|^2\,ds_x\,ds_y \\
 &\quad=
 \frac{1}{\len(\gamma)^2}\int_\gamma\int_\gamma
 \bigl(|x-\bar x|^2+|y-\bar x|^2
 -2\langle x-\bar x,y-\bar x\rangle\bigr)
 \,ds_x\,ds_y.
\end{align*}
Since $\int_\gamma (x-\bar x)\,ds_x=0$, the cross term vanishes.  Hence
\[
 \frac{1}{\len(\gamma)^2}\int_\gamma\int_\gamma |x-y|^2\,ds_x\,ds_y
 =
 \frac{2}{\len(\gamma)}\int_\gamma |x-\bar x|^2\,ds_x.
\]
Taking square roots gives $\sD_2(\gamma)=\sqrt2 R_2(\gamma)$.
\end{proof}

%==================================================
\section{Polygonal theory and approximation}\label{sec:polygonal}
%==================================================

We now formulate polygonal analogues.  Let $\Pol_n(K)$ denote the set of embedded polygonal $n$-gons representing the knot type $K$.  A polygonal thickness $\Thi_{\mathrm{poly}}(P)$ may be defined using the usual minimum of local curvature radius at vertices and half the doubly critical self-distance, following the standard polygonal ropelength framework. Figure~\ref{fig:polygonal-approximation} summarizes the approximation picture proved later in this section.

\begin{figure}[t]
\centering
\begin{tikzpicture}[>=Latex, scale=0.64]
\begin{scope}[xshift=0cm]
\draw[thick, blue] plot[smooth cycle, tension=.9] coordinates {(-1.2,-0.3) (-0.8,1.0) (0.2,1.3) (1.2,0.5) (1.0,-0.8) (0.0,-1.2) (-1.0,-0.9)};
\node[font=\scriptsize] at (0,-1.75) {$\gamma$};
\end{scope}
\draw[->, thick] (1.80,0) -- (3.45,0);
\node[font=\scriptsize, above] at (2.62,0.92) {inscribed polygons};
\begin{scope}[xshift=5.65cm]
\draw[thick, blue!35] plot[smooth cycle, tension=.9] coordinates {(-1.2,-0.3) (-0.8,1.0) (0.2,1.3) (1.2,0.5) (1.0,-0.8) (0.0,-1.2) (-1.0,-0.9)};
\draw[very thick, orange] (-1.15,-0.30) -- (-0.75,0.95) -- (0.20,1.28) -- (1.15,0.50) -- (1.00,-0.75) -- (0.05,-1.18) -- (-1.00,-0.90) -- cycle;
\node[font=\scriptsize] at (0,-1.75) {$P_n$};
\end{scope}
\draw[->, thick] (7.55,0) -- (9.05,0);
\node[font=\scriptsize, above] at (8.15,0.72) {$n\to\infty$};
\begin{scope}[xshift=12.05cm]
\node[draw, rounded corners, fill=gray!10, align=center, inner sep=5pt, font=\scriptsize] {\(\displaystyle \CRad_{D,n}(K) \to \CRad_D(K)\)\\[2pt]\(\displaystyle \Pack_{D,n}(K) \to \Pack_D(K)\)};
\end{scope}
\end{tikzpicture}
\caption{Schematic picture of polygonal approximation: a smooth representative $\gamma$ is approximated by polygonal representatives $P_n$, and the polygonal compression and packing invariants converge to their smooth counterparts.}
\label{fig:polygonal-approximation}
\end{figure}
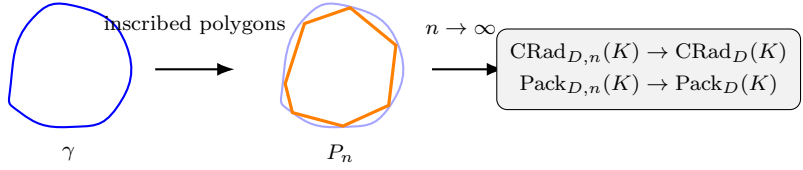

\begin{definition}[Polygonal invariants]
For $P\in\Pol_n(K)$, define
\[
 \rho_D(P)=\frac{\len(P)}{D(P)},
 \qquad
 \CRad_D(P)=\frac{D(P)}{\Thi_{\mathrm{poly}}(P)},
 \qquad
 \Pack_D(P)=\frac{\Thi_{\mathrm{poly}}(P)}{D(P)}.
\]
Define
\[
 \rho_{D,n}(K)=\inf_{P\in\Pol_n(K)}\rho_D(P),
\]
\[
 \CRad_{D,n}(K)=\inf_{P\in\Pol_n(K)}\CRad_D(P),
\]
and
\[
 \Pack_{D,n}(K)=\sup_{P\in\Pol_n(K)}\Pack_D(P).
\]
\end{definition}

\begin{proposition}[Polygonal representative-level decomposition]
For every polygonal representative $P$ with positive polygonal thickness,
\[
 \Rop_{\mathrm{poly}}(P)=\rho_D(P)\CRad_D(P).
\]
Consequently,
\[
 \Rop_{\mathrm{poly},n}(K)
 \geq
 \rho_{D,n}(K)\CRad_{D,n}(K).
\]
\end{proposition}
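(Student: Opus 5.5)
The plan is to mirror the smooth argument of Lemma~\ref{prop:representative-decomposition} and Lemma~\ref{lem:optimized-inequality} word for word, replacing $\Thi$ by $\Thi_{\mathrm{poly}}$ and $\Hg(K)$ by $\Pol_n(K)$. Here $\Rop_{\mathrm{poly}}(P)=\len(P)/\Thi_{\mathrm{poly}}(P)$ and $\Rop_{\mathrm{poly},n}(K)=\inf_{P\in\Pol_n(K)}\Rop_{\mathrm{poly}}(P)$, with the infimum taken over those $P$ of positive polygonal thickness.

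First I will check that every quantity in the identity is a finite positive number. For an embedded polygon $P$, we have $D(P)>0$ by axiom (3) of a geometric size functional, applied to $P$ viewed as an admissible closed curve. We also have $\len(P)<\infty$, and $\Thi_{\mathrm{poly}}(P)>0$ by hypothesis. Hence
\[
 \frac{\len(P)}{\Thi_{\mathrm{poly}}(P)}=\frac{\len(P)}{D(P)}\cdot\frac{D(P)}{\Thi_{\mathrm{poly}}(P)}
\]
holds by cancelling $D(P)$. By the polygonal definitions, this is exactly $\Rop_{\mathrm{poly}}(P)=\rho_D(P)\CRad_D(P)$.

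For the optimized inequality, I fix $P\in\Pol_n(K)$ with $\Thi_{\mathrm{poly}}(P)>0$. Both factors are nonnegative, and by definition of the infima $\rho_D(P)\geq\rho_{D,n}(K)\geq0$ and $\CRad_D(P)\geq\CRad_{D,n}(K)\geq0$. Multiplying these inequalities between nonnegative numbers gives
\[
 \Rop_{\mathrm{poly}}(P)\geq\rho_{D,n}(K)\CRad_{D,n}(K).
\]
Taking the infimum over $P$ then yields the claim. If $\Pol_n(K)$ is empty, as happens when $n$ is below the stick number, all infima are $+\infty$. Under the convention that the right-hand side is then also $+\infty$ (or the statement is vacuous), nothing further needs to be checked.

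I expect no real obstacle. The only point needing care is making sure that $D$ is defined and positive on polygons, that is, that polygons count as admissible curves for $D$. It also matters that the class over which $\CRad_{D,n}$ is minimized coincides with, or contains, the class used for $\Rop_{\mathrm{poly},n}$, namely polygons with positive polygonal thickness. Then the comparison of infima is legitimate.
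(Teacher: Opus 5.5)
Your proposal is correct and follows the paper's proof: you cancel $D(P)$ to get the representative-level identity, then bound each factor below by its infimum over $\Pol_n(K)$ and take the infimum over $P$. Your extra checks (positivity of $D$ on polygons, nonnegativity of the factors, matching the minimization classes, and the empty-class convention) are valid refinements that the paper leaves implicit.
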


\begin{proof}
The first assertion follows by cancellation of $D(P)$.  The second follows by taking the infimum over $P\in\Pol_n(K)$.
\end{proof}

\begin{remark}
The equality
\[
 \Rop_{\mathrm{poly},n}(K)=\rho_{D,n}(K)\CRad_{D,n}(K)
\]
need not hold.  Even for fixed $n$, the polygon minimizing ropelength need not minimize density or compression radius.
\end{remark}

We state the approximation theorem in a form that isolates the analytic hypotheses needed for a particular choice of $D$.

\begin{assumption}[Approximation and compactness hypotheses]\label{ass:approx}
Let \(D\) be a geometric size functional.  Assume the following.
\begin{enumerate}
 \item For every \(C^{1,1}\) representative \(\gamma\in\Hg(K)\) of positive thickness, there exist polygons \(P_n\in\Pol_n(K)\) such that \(P_n\to\gamma\), \(D(P_n)\to D(\gamma)\), and \(\Thi_{\mathrm{poly}}(P_n)\to\Thi(\gamma)\).
 \item If \(P_n\in\Pol_n(K)\) satisfies \(\Thi_{\mathrm{poly}}(P_n)=1\) and \(D(P_n)\leq C\), then, after translations and passing to a subsequence, \(P_n\) converges to an admissible representative \(\gamma\in\Hg(K)\) of the same knot type.
 \item For every subsequence as in (2), the compression ratio is lower semicontinuous in the normalized form:
 \[
  \frac{D(\gamma)}{\Thi(\gamma)}
  \leq
  \liminf_{n\to\infty}
  \frac{D(P_n)}{\Thi_{\mathrm{poly}}(P_n)}.
 \]
\end{enumerate}
\end{assumption}

\begin{lemma}[Concrete verification for diameter and enclosing radius]\label{lem:diam-rmin-assumption}
For \(D=\diam\) and \(D=R_{\min}\), Assumption~\ref{ass:approx} holds in the standard polygonal ropelength setting.
\end{lemma}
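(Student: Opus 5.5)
I would verify the three items of Assumption~\ref{ass:approx} one at a time. The proof will rest on two standard facts from polygonal ropelength theory, which I take as black boxes. The first is Rawdon's convergence theorem: inscribed polygons $P_n$ of a $C^{1,1}$ curve $\gamma$ with mesh tending to zero satisfy $\Thi_{\mathrm{poly}}(P_n)\to\Thi(\gamma)$. The second is the compactness and upper semicontinuity of thickness under $C^0$ (indeed $C^1$) convergence of polygons with uniformly bounded polygonal thickness from below. The only facts I need about $D=\diam$ and $D=R_{\min}$ are that both are $1$-Lipschitz with respect to Hausdorff distance of compact sets:
\[
|\diam(A)-\diam(B)|\le 2d_H(A,B),\qquad |R_{\min}(A)-R_{\min}(B)|\le d_H(A,B).
\]
Indeed, enlarging an enclosing ball by $d_H$ encloses the other set. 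Together with the comparison $\tfrac12\diam\le R_{\min}\le\diam$, this lets the two cases be handled simultaneously.

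\textbf{Item (1).} Take $P_n$ inscribed in $\gamma$ at equally spaced arclength parameters. For $n$ large, $P_n$ is embedded and ambient isotopic to $\gamma$, because $\gamma$ has a tubular neighbourhood of radius $\Thi(\gamma)>0$ and $P_n$ lies $C^1$-close inside it. Hence $P_n\in\Pol_n(K)$. Since $d_H(P_n,\gamma)\le \len(\gamma)/n\to0$, the Lipschitz bounds give $D(P_n)\to D(\gamma)$. Rawdon's theorem gives $\Thi_{\mathrm{poly}}(P_n)\to\Thi(\gamma)$.

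\textbf{Item (2).} Normalize so that $\Thi_{\mathrm{poly}}(P_n)=1$ and $D(P_n)\le C$. Then $\diam(P_n)\le 2C$ in either case, so after translation all $P_n$ lie in a fixed ball. The key quantitative input is a length bound. A polygon of thickness $1$ has disjoint-tube type packing, so $\len(P_n)\lesssim \diam(P_n)^3$, by a volume comparison of the unit tube inside the $(2C+1)$-ball. The polygonal curvature bound (the local radius at vertices is at least $1$) makes the arclength parametrizations uniformly Lipschitz with controlled turning angles. Arzel\`a--Ascoli then gives a subsequence converging in $C^0$, and in fact in $C^1$ after using the turning-angle control, to a limit curve $\gamma$. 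The standard result that thickness is upper semicontinuous along such limits then gives $\Thi(\gamma)\ge1$, so $\gamma$ is $C^{1,1}$ and embedded. The knot type is preserved because, for large $n$, $P_n$ lies in the thickness-$\tfrac12$ tube around $\gamma$ and is $C^1$-close to it, so it is isotopic to the core.

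\textbf{Item (3).} This follows from the same two ingredients. Convergence in $C^0$ gives $D(P_n)\to D(\gamma)$ by the Lipschitz bounds. Upper semicontinuity gives $\Thi(\gamma)\ge\limsup\Thi_{\mathrm{poly}}(P_n)=1$. Therefore
\[
D(\gamma)/\Thi(\gamma)\le \lim D(P_n)=\liminf D(P_n)/\Thi_{\mathrm{poly}}(P_n).
\]

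I expect the main obstacle to be item (2). The delicate points are (a) ensuring the limit has genuinely positive thickness, rather than merely being a $C^0$ limit, which requires the semicontinuity of polygonal thickness relative to smooth thickness, and (b) ruling out that the length of $P_n$ escapes to infinity or collapses to zero. For (a), I would first try to cite the known semicontinuity results of Rawdon and of Millett--Rawdon. For (b), the lower bound on length comes from $\Thi_{\mathrm{poly}}=1$, which forces $\len(P_n)\ge 2\pi$-type bounds, and the upper bound comes from the tube-volume argument.
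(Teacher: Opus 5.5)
Your route is the same as the paper's. Item (1) uses inscribed polygons, convergence of polygonal thickness, and Hausdorff continuity of \(D\); item (2) uses a tube-volume length bound, Arzel\`a--Ascoli, and semicontinuity of thickness; item (3) uses \(\Thi(\gamma)\ge1\) plus continuity of \(D\). Item (1) is fine. The gap is in (2)--(3), at two steps: your claim of \(C^1\) convergence from turning-angle control, and your claim that \(\Thi(\gamma)\geq\limsup\Thi_{\mathrm{poly}}(P_n)=1\).

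The MinRad condition at a vertex \(v\) only gives \(\tan(\theta_v/2)\le\tfrac12\min(a_v,b_v)\), where \(a_v,b_v\) are the adjacent edge lengths. This forces small turning angles only where edges are short, and nothing makes the edges of \(P_n\in\Pol_n(K)\) short. The semicontinuity results you intend to cite do not help either. They compare smooth thickness along a sequence of curves, or they concern inscribed polygons of a fixed \(C^{1,1}\) curve with mesh \(\to0\). They never bound the smooth thickness of a limit by the polygonal thickness of arbitrary approximants. Indeed, any polygon with a corner has \(\Thi_{\mathrm{poly}}>0\) but smooth thickness \(0\). The paper's proof makes exactly the same move (``gives \(\Thi(\gamma)\geq1\) by lower semicontinuity of thickness'').

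In fact items (2) and (3) are false as formulated. Take a fixed embedded \(m\)-gon \(Q\) of type \(K\). For \(n\ge m+2\), let \(P_n\) be \(Q\) with \(n-m\) collinear vertices inserted along one edge: two at its trisection points and the rest between them. (A slight smooth perturbation handles conventions that forbid straight vertices.) Straight vertices impose no MinRad constraint and do not change the doubly critical self-distance. The corners at the ends of that edge now see an adjacent edge of fixed length. Hence \(\Thi_{\mathrm{poly}}(P_n)\) is a positive constant. After one rescaling, \(\Thi_{\mathrm{poly}}(P_n)=1\) and \(D(P_n)\) is constant. But the only possible limit is the rescaled \(Q\), which is not \(C^{1,1}\), has \(\Thi=0\), and violates (3).

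So no argument along these lines can establish (2)--(3). What can be proved is the consequence actually used in Theorem~\ref{thm:compression-approx}, by a different mechanism. Restricting (2) to sequences with mesh \(\to0\) would not suffice, because the theorem applies (2) to near-minimizers of \(\CRad_{D,n}(K)\), and those need not have short edges. Instead of compactness, round each corner of a polygon \(P\) by a circular arc of radius \(\Thi_{\mathrm{poly}}(P)\) tangent to the two adjacent edges.

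1. The MinRad condition ensures each arc fits within half of each adjacent edge.
2. Rawdon's corner-rounding theorem (the fact that polygonal ropelength bounds smooth ropelength from above) then gives a \(C^{1,1}\) curve of the same knot type with \(\Thi\geq\Thi_{\mathrm{poly}}(P)\).
3. Each arc lies in the triangle spanned by its vertex and its two tangency points. So the rounded curve lies in \(\Conv(P)\), and its diameter and minimal enclosing radius are at most those of \(P\).

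Hence \(\CRad_D(K)\le\CRad_{D,n}(K)\) for every \(n\) when \(D=\diam\) or \(D=R_{\min}\). Together with your item (1), this yields the theorem's conclusion.
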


\begin{proof}
For the approximation part, take sufficiently fine inscribed polygons in a \(C^{1,1}\) representative.  The convergence of polygonal thickness to smooth thickness is the main polygonal-thickness convergence theorem of Rawdon--Simon \cite{RawdonSimon06}.  Diameter is continuous under Hausdorff convergence.  The minimal enclosing radius is also continuous under Hausdorff convergence: if two compact sets have Hausdorff distance at most \(\eta\), then each is contained in the \(\eta\)-neighborhood of the other, and their minimal enclosing radii differ by at most \(\eta\).

For compactness, first consider \(D=\diam\).  After translation, the condition \(\diam(P_n)\leq C\) places all vertices and edges in a fixed ball.  The condition \(\Thi_{\mathrm{poly}}(P_n)=1\) gives an embedded unit tubular neighborhood.  Since this tube is contained in a fixed slightly larger ball, its volume gives a uniform bound for \(\len(P_n)\).  Arclength parametrizations therefore form an equi-Lipschitz family, and a subsequence converges uniformly.  The thickness lower bound prevents collapse to a self-intersecting or singular limit and gives \(\Thi(\gamma)\geq1\) by lower semicontinuity of thickness.  Knot type is preserved under sufficiently close convergence with a uniform embedded tube.  Moreover, in this normalized situation, \(\Thi(\gamma)\geq1\) and lower semicontinuity of diameter give
\[
 \frac{\diam(\gamma)}{\Thi(\gamma)}\leq \diam(\gamma)
 \leq \liminf_{n\to\infty}\diam(P_n)
 =\liminf_{n\to\infty}\frac{\diam(P_n)}{\Thi_{\mathrm{poly}}(P_n)}.
\]
Thus the required lower semicontinuity of the ratio holds for \(D=\diam\).

For \(D=R_{\min}\), the bound \(R_{\min}(P_n)\leq C\) implies, after translating the centers of minimal enclosing balls to a bounded region, that \(P_n\) is contained in a fixed ball of radius \(C+1\).  The same tube-volume and compactness argument applies.  Finally, \(R_{\min}\) is lower semicontinuous, indeed continuous under Hausdorff convergence, as observed above.  Since \(\Thi(\gamma)\geq1\) in the normalized limit,
\[
 \frac{R_{\min}(\gamma)}{\Thi(\gamma)}\leq R_{\min}(\gamma)
 \leq \liminf_{n\to\infty}R_{\min}(P_n)
 =\liminf_{n\to\infty}\frac{R_{\min}(P_n)}{\Thi_{\mathrm{poly}}(P_n)}.
\]
Hence the ratio lower semicontinuity also holds for \(D=R_{\min}\).
\end{proof}

\begin{theorem}[Polygonal approximation for compression radii]\label{thm:compression-approx}
Suppose that \(D\) satisfies Assumption~\ref{ass:approx}.  Then
\[
 \lim_{n\to\infty}\CRad_{D,n}(K)=\CRad_D(K).
\]
If \(\CRad_D(K)>0\), then
\[
 \lim_{n\to\infty}\Pack_{D,n}(K)=\Pack_D(K).
\]
In particular, the conclusion holds for \(D=\diam\) and \(D=R_{\min}\).
\end{theorem}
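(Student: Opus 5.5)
The argument is the usual $\Gamma$-convergence pattern: an upper bound (limsup) from Assumption~\ref{ass:approx}(1) and a lower bound (liminf) from parts (2) and (3). Write $c=\CRad_D(K)$ and $c_n=\CRad_{D,n}(K)$.

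\emph{Upper bound.} Fix $\varepsilon>0$ and choose $\gamma\in\Hg(K)$ with $D(\gamma)/\Thi(\gamma)<c+\varepsilon$. Assumption~\ref{ass:approx}(1) gives $P_n\in\Pol_n(K)$ with $D(P_n)\to D(\gamma)$ and $\Thi_{\mathrm{poly}}(P_n)\to\Thi(\gamma)>0$. Hence $c_n\le D(P_n)/\Thi_{\mathrm{poly}}(P_n)\to D(\gamma)/\Thi(\gamma)$. Letting $\varepsilon\to0$ gives $\limsup_n c_n\le c$. A minor technical point is that (1) should be read as providing such $P_n$ for all large $n$, not just along a subsequence. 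For inscribed polygons one can take $n$ arbitrary, so this causes no trouble.

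\emph{Lower bound.} Pick a subsequence $n_k$ with $c_{n_k}\to\liminf_n c_n=:\ell$. Then pick $P_{n_k}\in\Pol_{n_k}(K)$ with $D(P_{n_k})/\Thi_{\mathrm{poly}}(P_{n_k})<c_{n_k}+1/k$. By scale invariance of the ratio (scale covariance of $D$ together with homogeneity of polygonal thickness), rescale so that $\Thi_{\mathrm{poly}}(P_{n_k})=1$. Then $D(P_{n_k})\le C$ uniformly, since $\ell\le\limsup c_n\le c<\infty$. Parts (2) and (3) of the Assumption, applied along this sequence, give a further subsequence converging to some $\gamma\in\Hg(K)$ with
\[
c\le \frac{D(\gamma)}{\Thi(\gamma)}\le\liminf_k \frac{D(P_{n_k})}{\Thi_{\mathrm{poly}}(P_{n_k})}\le \ell .
\]
Combining this with the upper bound yields $c_n\to c$.

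The main obstacle is this lower-bound step: part (2) is phrased for a full sequence indexed by $n$, whereas I need it along a subsequence $n_k$. I would handle this by noting that the argument in Lemma~\ref{lem:diam-rmin-assumption} (tube volume bound, equi-Lipschitz, Arzel\`a--Ascoli) is insensitive to the indexing, so (2)--(3) apply verbatim to subsequences. I would also check that $n_k\to\infty$ is all that is used.

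For packing, if $c>0$ then $c_n>0$ for large $n$. Since $\Pack_{D,n}(K)=\sup_P 1/\CRad_D(P)=1/c_n$ and $\Pack_D(K)=1/c$, continuity of $x\mapsto1/x$ on $(0,\infty)$ gives the second claim. The final sentence of the theorem follows from Lemma~\ref{lem:diam-rmin-assumption}.
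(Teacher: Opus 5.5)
Your proposal is correct and follows essentially the same route as the paper. The limsup bound comes from applying Assumption~\ref{ass:approx}(1) to a near-optimal smooth representative. The liminf bound comes from normalizing near-optimal polygons to $\Thi_{\mathrm{poly}}=1$ and invoking parts (2) and (3), and the packing statement follows by taking reciprocals. Your additional remarks only make explicit points that the paper's proof passes over silently: you use the upper bound to guarantee $\liminf_n \CRad_{D,n}(K)<\infty$, and you read parts (1)--(2) as applying along subsequences.
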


\begin{proof}
We prove the two inequalities.  For the upper limit, fix \(\eps>0\) and choose \(\gamma\in\Hg(K)\) such that
\[
 \frac{D(\gamma)}{\Thi(\gamma)}<\CRad_D(K)+\eps.
\]
By Assumption~\ref{ass:approx}, there are polygons \(P_n\in\Pol_n(K)\) with \(D(P_n)\to D(\gamma)\) and \(\Thi_{\mathrm{poly}}(P_n)\to\Thi(\gamma)\).  Therefore
\[
 \limsup_{n\to\infty}\CRad_{D,n}(K)
 \leq
 \lim_{n\to\infty}\frac{D(P_n)}{\Thi_{\mathrm{poly}}(P_n)}
 =
 \frac{D(\gamma)}{\Thi(\gamma)}
 <
 \CRad_D(K)+\eps.
\]
Letting \(\eps\to0\) gives
\[
 \limsup_{n\to\infty}\CRad_{D,n}(K)
 \leq \CRad_D(K).
\]

For the lower limit, let
\[
 L=\liminf_{n\to\infty}\CRad_{D,n}(K).
\]
If \(L=\infty\), there is nothing to prove.  Otherwise choose a subsequence, still denoted by \(n\), and polygons \(P_n\in\Pol_n(K)\) such that
\[
 \frac{D(P_n)}{\Thi_{\mathrm{poly}}(P_n)}
 \leq
 \CRad_{D,n}(K)+\eps_n,
 \qquad \eps_n\to0,
\]
and such that the right-hand side remains bounded.  By scale invariance, rescale so that \(\Thi_{\mathrm{poly}}(P_n)=1\).  Then \(D(P_n)\) is bounded along this subsequence.  Assumption~\ref{ass:approx} gives a further subsequence converging to some \(\gamma\in\Hg(K)\) with \(\Thi(\gamma)\geq1\) and
\[
 D(\gamma)\leq\liminf_{n\to\infty}D(P_n).
\]
By the ratio lower semicontinuity in Assumption~\ref{ass:approx},
\[
 \frac{D(\gamma)}{\Thi(\gamma)}
 \leq
 \liminf_{n\to\infty}\frac{D(P_n)}{\Thi_{\mathrm{poly}}(P_n)}
 =L.
\]
Therefore
\[
 \CRad_D(K)
 \leq
 \frac{D(\gamma)}{\Thi(\gamma)}
 \leq L.
\]
This proves
\[
 \CRad_D(K)\leq \liminf_{n\to\infty}\CRad_{D,n}(K).
\]
Combining the upper and lower inequalities proves the convergence of \(\CRad_{D,n}(K)\).  The statement for packing ratios follows by taking reciprocals.
\end{proof}

\begin{corollary}[Other size functionals]
The conclusion of Theorem~\ref{thm:compression-approx} applies to any choice of \(D\) for which Assumption~\ref{ass:approx} can be verified.  Natural further candidates include
\[
 D=R_p\quad(1\leq p<\infty),
 \qquad
 D=\sD_p\quad(1\leq p\leq\infty),
\]
and regularized convex-hull sizes \(D_{\mathrm{conv},\eta}\).  Their verification requires the corresponding continuity or semicontinuity statements for the chosen functional and is not automatic from the definitions alone.
\end{corollary}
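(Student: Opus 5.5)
The corollary follows by inspecting the proof of Theorem~\ref{thm:compression-approx}. That proof never used any property of \(D=\diam\) or \(D=R_{\min}\) beyond the three items of Assumption~\ref{ass:approx} and the scale covariance built into the definition of a geometric size functional. The plan is to restate the argument abstractly for an arbitrary \(D\) satisfying the assumption, and to check that each step invokes only that hypothesis.

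\textbf{Upper limit.} Take a near-minimizer \(\gamma\in\Hg(K)\) for \(\CRad_D(K)\). Item~(1) of Assumption~\ref{ass:approx} supplies polygons \(P_n\in\Pol_n(K)\) with \(D(P_n)\to D(\gamma)\) and \(\Thi_{\mathrm{poly}}(P_n)\to\Thi(\gamma)>0\). This gives \(\limsup_n\CRad_{D,n}(K)\leq\CRad_D(K)\), exactly as in the theorem.

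\textbf{Lower limit.} Take near-minimizing polygons and rescale them to \(\Thi_{\mathrm{poly}}(P_n)=1\), which is legitimate by scale covariance of \(D\). Their \(D\)-values are then bounded. Item~(2) yields a convergent subsequence with an admissible limit of type \(K\), and item~(3) gives the ratio lower semicontinuity. Together these give \(\CRad_D(K)\leq\liminf_n\CRad_{D,n}(K)\).

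\textbf{Packing ratios.} When \(\CRad_D(K)>0\), the statement for packing ratios follows by taking reciprocals, using \(\Pack_{D,n}(K)=1/\CRad_{D,n}(K)\) and \(\Pack_D(K)=1/\CRad_D(K)\).

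This establishes the corollary as stated, conditional on Assumption~\ref{ass:approx}. The remaining content concerns the listed candidates, and I would indicate how verification would go for them, following the template of Lemma~\ref{lem:diam-rmin-assumption}.

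\textbf{Item~(1) for the candidates.} Inscribed polygons converge in arclength parametrization, with \(\len(P_n)\to\len(\gamma)\). Hence the integral functionals \(R_p\) and \(\sD_p\) converge by dominated convergence. The functional \(D_{\mathrm{conv},\eta}\) converges because convex-hull volume and diameter are Hausdorff-continuous. Thickness convergence is again the Rawdon--Simon theorem.

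\textbf{Item~(2) for the candidates: the main obstacle.} A bound on \(D\) must force confinement to a fixed ball, so that the tube-volume length bound applies. This is automatic for \(D_{\mathrm{conv},\eta}\geq\eta^{1/3}\diam\), but not for the averaged functionals \(R_p\) and \(\sD_p\). The argument I would try first uses unit thickness. An arc reaching distance \(r\) from the center \(a\) must carry arclength of order \(r\) at distance at least \(r/2\). Together with a total-length bound obtained from \(\len\leq c\,\diam^3\), this gives a lower bound on \(\frac1{\len}\int|x-a|^p\,ds\) that grows with \(r\). A bounded \(R_p\) then bounds the diameter, and \(\sD_p\) can be handled similarly since \(\sD_p\geq R_p\) for \(p\geq1\).

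\textbf{Item~(3) for the candidates.} Once the limit has \(\Thi(\gamma)\geq1\), item~(3) reduces to continuity of \(D\) along the convergent subsequence, as in the diameter case.
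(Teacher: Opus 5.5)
For the corollary as stated, your argument is correct and matches the paper. Theorem~\ref{thm:compression-approx} is already formulated for an arbitrary $D$ satisfying Assumption~\ref{ass:approx}, so the corollary is immediate. Re-running the two-sided limit argument is harmless but redundant. The paper does not claim that the Assumption holds for $R_p$, $\sD_p$ or $D_{\mathrm{conv},\eta}$, so your verification sketch is optional. However, its key step for item~(2) fails as written.

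Let $r$ be the maximal distance from the center $a$. Your two ingredients give $\int_\gamma|x-a|^p\,ds\gtrsim r\,(r/2)^p$ and $\len(\gamma)\lesssim(2r+1)^3$, hence
\[
 \frac{1}{\len(\gamma)}\int_\gamma|x-a|^p\,ds\gtrsim r^{p-2}.
\]
This grows with $r$ only for $p>2$. For $p=2$ (the radius of gyration) the lower bound stays bounded, and for $1\leq p<2$ it tends to $0$. So a bound on $R_p$ gives no control of $r$ precisely in the most natural range. The estimate \emph{arclength of order $r$ far from $a$} is too weak once the curve is long.

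The fix is to localize the tube-volume estimate. Unit thickness gives $\len(\gamma\cap B(a,\rho))\leq c(\rho+1)^3$ for every ball. Choose $\rho$ with $c(\rho+1)^3=\len(\gamma)/2$. Then at least half the length lies at distance $\geq\rho$ from $a$, uniformly in $a$, so $R_p(\gamma)\geq 2^{-1/p}\rho\gtrsim\len(\gamma)^{1/3}$ for large length and every $p$. A bound on $R_p$ therefore bounds the length, hence $\diam\leq\len/2$. Your comparison $\sD_p\geq R_p$, which is correct, transfers this to $\sD_p$.

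There is a similar gap in item~(3) for the arclength-averaged functionals. Uniform convergence from equi-Lipschitz compactness does not suffice, because length is only lower semicontinuous. You need convergence of the normalized arclength measures, i.e.\ $\len(P_n)\to\len(\gamma)$, and that must come from the tangent control supplied by the thickness bound.
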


\begin{remark}
The main extra difficulty for compression radii, compared with density approximation, is the presence of thickness.  Density approximation requires convergence of length and spatial size.  Compression approximation requires convergence or at least suitable semicontinuity of polygonal thickness.
\end{remark}

%==================================================
\section{Distortion, trunk, and supertrunk}\label{sec:distortion-trunk}
%==================================================

This section records relationships and questions connecting compression and density with other geometric invariants. Figure~\ref{fig:distortion-trunk-schematic} collects the two guiding geometric pictures.

\begin{figure}[t]
\centering
\begin{tikzpicture}[>=Latex, scale=0.95]
\begin{scope}[xshift=0cm]
\node at (0,2.15) {distortion};
\draw[thick, blue] plot[smooth cycle, tension=.9] coordinates {(-1.55,0.10) (-1.10,1.20) (0.20,1.00) (1.20,0.25) (1.00,-0.95) (-0.05,-1.25) (-1.25,-0.75)};
\fill (-0.18,0.98) circle (1.5pt) node[above] {$p$};
\fill (0.10,0.58) circle (1.5pt) node[right] {$q$};
\draw[red, thick] (-0.18,0.98) -- (0.10,0.58);
\node[red!70!black, right] at (0.22,1.22) {$|p-q|$};
\draw[orange!85!black, very thick] plot[smooth] coordinates {(-0.18,0.98) (-0.95,1.18) (-1.45,0.42) (-1.22,-0.62) (-0.05,-1.25) (0.85,-0.92) (1.20,0.25) (0.72,0.88) (0.10,0.58)};
\node[orange!85!black, below] at (0,-1.7) {$d_\gamma(p,q) \gg |p-q|$};
\end{scope}
\begin{scope}[xshift=7.2cm]
\node at (0,2.15) {trunk / supertrunk};
\draw[gray!60] (-1.9,1.25) -- (1.9,1.25);
\draw[gray!60] (-1.9,0) -- (1.9,0);
\draw[gray!60] (-1.9,-1.25) -- (1.9,-1.25);
\draw[thick, blue] plot[smooth] coordinates {(-1.35,1.70) (-0.80,0.90) (-0.15,1.38) (0.62,0.52) (1.25,1.68)};
\draw[thick, blue] plot[smooth] coordinates {(-1.45,0.55) (-0.72,-0.48) (-0.15,0.30) (0.52,-0.82) (1.32,0.38)};
\draw[thick, blue] plot[smooth] coordinates {(-1.18,-0.78) (-0.55,-1.70) (0.15,-0.92) (0.86,-1.58) (1.42,-0.72)};
\foreach \x/\y in {-0.52/1.25,0.98/1.25,-1.02/0,-0.36/0,0.26/0,1.04/0,-0.86/-1.25,0.47/-1.25}{\fill (\x,\y) circle (1.2pt);} 
\node at (0,-2.02) {$\max_t \lvert \gamma \cap h_v^{-1}(t) \rvert$};
\end{scope}
\end{tikzpicture}
\caption{Left: nearby points with long arclength separation contribute to distortion. Right: intersections with level planes measure trunk in a chosen direction; taking the worst direction leads to a supertrunk-type quantity.}
\label{fig:distortion-trunk-schematic}
\end{figure}
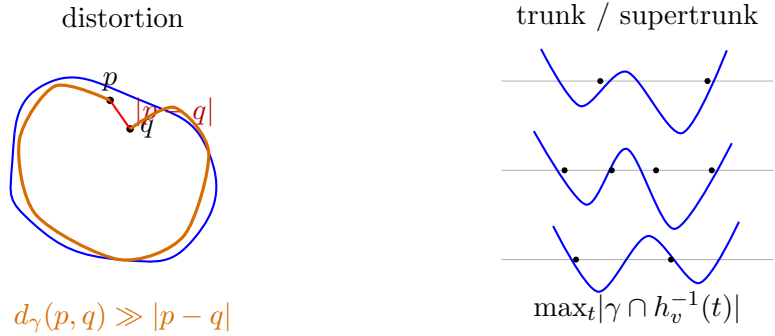

\subsection{Distortion}

For an embedded curve $\gamma$, its distortion is
\[
 \deltaK(\gamma)=
 \sup_{p,q\in\gamma}
 \frac{d_\gamma(p,q)}{|p-q|},
\]
where $d_\gamma(p,q)$ denotes the shorter arclength distance between $p$ and $q$ along $\gamma$.  The distortion of a knot type is
\[
 \deltaK(K)=\inf_{\gamma\in K}\deltaK(\gamma).
\]
Distortion was studied by Gromov and by Denne--Sullivan, who proved a nontrivial lower bound for nontrivial tame knots; Pardon later established strong lower bounds for certain torus knots \cite{DenneSullivan,Pardon}.

\begin{proposition}[Density gives a distortion lower bound]\label{prop:distortion-density}
For every embedded closed curve \(\gamma\),
\[
 \deltaK(\gamma)
 \geq
 \frac{1}{2}\frac{\len(\gamma)}{\diam(\gamma)}.
\]
Consequently,
\[
 \deltaK(\gamma)
 \geq
 \frac14\frac{\len(\gamma)}{R_{\min}(\gamma)}.
\]
\end{proposition}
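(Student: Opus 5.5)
The plan is to test the supremum defining distortion on one well-chosen pair: two points that split $\gamma$ into arcs of equal length. Parametrize $\gamma$ by arclength on $[0,\len(\gamma))$ with base point $p=\gamma(0)$, and set $q=\gamma(\len(\gamma)/2)$. The two arcs of $\gamma$ joining $p$ to $q$ both have length $\len(\gamma)/2$, so the intrinsic distance is exactly
\[
 d_\gamma(p,q)=\frac{\len(\gamma)}{2}.
\]
For the chord, we only need that $p$ and $q$ lie on $\gamma$, which gives $|p-q|\leq\diam(\gamma)$. Since $\gamma$ is embedded, $p\neq q$, so $|p-q|>0$ and the quotient is defined. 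The definition of $\deltaK(\gamma)$ as a supremum then gives
\[
 \deltaK(\gamma)\geq\frac{d_\gamma(p,q)}{|p-q|}\geq\frac{\len(\gamma)/2}{\diam(\gamma)},
\]
which is the first inequality.

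For the second inequality, combine the first with the comparison $\diam(\gamma)\leq 2R_{\min}(\gamma)$ recorded in the example on minimal enclosing radius. This comparison holds because any two points of $\gamma$ lie in a ball of radius $R_{\min}(\gamma)$; if the infimum is not attained, apply it with $R_{\min}+\eps$ and let $\eps\to0$. Substituting gives
\[
 \frac{1}{2}\frac{\len(\gamma)}{\diam(\gamma)}\geq\frac{1}{4}\frac{\len(\gamma)}{R_{\min}(\gamma)}.
\]

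There is no real obstacle. The only points to check are that $d_\gamma$ means the shorter arc, which equals $\len/2$ exactly at antipodal parameters, and that the chosen pair is admissible in the supremum. One could instead try a diametral pair, but that gives no control on the arclength between the points, so the antipodal choice is the right one.
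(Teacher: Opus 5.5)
Your proposal is correct and follows the same argument as the paper: you test distortion on a pair of points splitting $\gamma$ into two arcs of length $\len(\gamma)/2$, bound the chord by $\diam(\gamma)$, and then use $\diam(\gamma)\leq 2R_{\min}(\gamma)$ for the second inequality. Your added checks (that $p\neq q$ by embeddedness, and the justification of $\diam\leq 2R_{\min}$) are harmless extras that the paper leaves implicit.
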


\begin{proof}
Choose a point \(p\in\gamma\) and let \(q\) be a point reached after traveling arclength \(\len(\gamma)/2\) from \(p\) along one orientation of the closed curve.  The two arcs of \(\gamma\) connecting \(p\) and \(q\) have equal length \(\len(\gamma)/2\).  Therefore the shorter arclength distance satisfies
\[
 d_\gamma(p,q)=\len(\gamma)/2.
\]
Since \(|p-q|\leq\diam(\gamma)\), the definition of distortion gives
\[
 \deltaK(\gamma)
 \geq
 \frac{\len(\gamma)/2}{\diam(\gamma)}.
\]
Finally, \(\diam(\gamma)\leq2R_{\min}(\gamma)\) gives the second inequality.
\end{proof}

For \(D=R_{\min}\) this can be written at the representative level as
\[
 \deltaK(\gamma)
 \geq
 \frac14\rho_R(\gamma)
 =
 \frac14\frac{\Rop(\gamma)}{\CRad_R(\gamma)}.
\]
This is a statement about a fixed representative \(\gamma\), not directly about optimized knot invariants.  Taking infima over representatives gives
\[
 \deltaK(K)\geq \frac14\rho_R(K),
\]
if the same class of representatives is used.  No equality with \(\Rop(K)/\CRad_R(K)\) is implied, since the minimizing representatives for ropelength and compression may differ.
Thus high density inside a fixed enclosing scale forces large distortion.  This implication is one-directional: large enclosing radius does not imply small distortion, because distortion is sensitive to pairs of points that are close in space but far along the curve.

\subsection{Trunk and supertrunk}

Let \(h_v(x)=\langle x,v\rangle\) be a height function in the direction \(v\in S^2\).  For a representative \(\gamma\), set
\[
 \trunk_v(\gamma)=\max_{t\in\RR}|\gamma\cap h_v^{-1}(t)|,
\]
where the maximum is taken over regular levels.  The trunk of \(\gamma\) is the minimum of \(\trunk_v(\gamma)\) over directions or, equivalently in the usual formulation, over Morse height functions; the trunk of a knot type is then obtained by minimizing over representatives.  Following the viewpoint used in work of Blair--Ozawa, one may define a supertrunk-type quantity by taking the maximum over directions before minimizing over representatives:
\[
 \strunk(\gamma)=\max_{v\in S^2}\trunk_v(\gamma),
 \qquad
 \strunk(K)=\inf_{\gamma\in\Hg(K)}\strunk(\gamma).
\]
This is analogous in spirit to Kuiper's superbridge index, where one takes the worst direction before optimizing the embedding; see \cite{Kuiper,BlairOzawa}.

The relation between compression and trunk is not an equality, but the geometry suggests natural bounds.  Suppose $\Thi(\gamma)\geq r$ and $\gamma$ is contained in a ball of radius $R$.  If a level plane intersects $\gamma$ in $N$ points, then the tube around $\gamma$ gives, heuristically, $N$ disjoint disks of radius comparable to $r$ inside a planar disk of radius comparable to $R$.  This suggests an estimate of the form
\[
 N\lesssim C\left(\frac{R}{r}\right)^2.
\]
At the level of scale-free compression radius, this becomes
\[
 \strunk(\gamma)
 \lesssim
 C\CRad_R(\gamma)^2,
\]
or equivalently
\[
 \CRad_R(\gamma)\gtrsim c\sqrt{\strunk(\gamma)}.
\]
There are two caveats.  First, containment of the curve in a ball does not, by itself, identify an optimal planar cross-section for every direction.  Second, the intersection points with a level plane do not automatically determine disjoint disks of radius exactly $r$ in that plane; this requires control of the angle of intersection and the local geometry of the tube.  Thus the preceding paragraph should be read as motivation, not as a proof.  A rigorous version would likely require a transversality assumption or an averaged coarea-type estimate for intersections of the tube with level planes.

This heuristic motivates the following conjectural direction.

\begin{conjecture}[Compression lower bound from supertrunk]
There is a universal constant $c>0$ such that for every knot type $K$,
\[
 \CRad_R(K)
 \geq
 c\sqrt{\strunk(K)}.
\]
Here $\strunk(K)$ denotes an appropriate supertrunk-type invariant.
\end{conjecture}

\begin{question}[Density and trunk]
Is there a universal function $f$ such that
\[
 \rho_R(K)
 \geq
 f(\trunk(K))?
\]
More generally, can one obtain lower bounds for $\rho_D(K)$ or $\CRad_D(K)$ in terms of trunk, supertrunk, superbridge number, or representativity?
\end{question}

The slogan is that trunk is a sectional shadow of density, while supertrunk is a directional shadow of compression.  Making this precise appears to require a careful analysis of tube intersections with level planes.

%==================================================
\section{Questions and further directions}\label{sec:questions}
%==================================================

We collect several questions suggested by the preceding framework.

\begin{question}[Simultaneous minimizers]
For which knot types $K$ and which size functionals $D$ does there exist a representative $\gamma\in\Hg(K)$ such that
\[
 \gamma\in\mathcal I_{\Rop}(K)
 \cap
 \mathcal I_{\rho_D}(K)
 \cap
 \mathcal I_{\CRad_D}(K)?
\]
Equivalently, when does
\[
 \Rop(K)=\rho_D(K)\CRad_D(K)
\]
hold?
\end{question}

\begin{question}[Comparison of ideal strata]
How different can the ropelength ideal stratum, the density ideal stratum, and the compression ideal stratum be?  Can one give examples in which these strata are disjoint?
\end{question}

\begin{question}[Natural strictness and the trefoil]
For a standard nontrivial knot such as the trefoil and for natural choices $D=\diam$ or $D=R_{\min}$, is the inequality
\[
 \Rop(K)\geq \rho_D(K)\CRad_D(K)
\]
strict?  Equivalently, do the ropelength ideal stratum, the density ideal stratum, and the compression ideal stratum fail to have a common minimizing sequence?  This question is deliberately separated from Example~\ref{ex:artificial-strictness}, which proves strictness only for an artificial size functional.
\end{question}

\begin{question}[Connected sum]
How do $\rho_D(K)$ and $\CRad_D(K)$ behave under connected sum?  Is either invariant subadditive, superadditive, or nonmonotone under connected sum for natural choices of $D$?
\end{question}

\begin{question}[Representativity and bridge-type complexity]
Do large representativity, large trunk, or large superbridge number force lower bounds for compression radii or density?  This question is kept within geometric knot invariants closely related to level surfaces and projections.
\end{question}

\begin{question}[Choice of size functional]
Which size functional best separates density from compression?  In particular, how different are the invariants obtained from $\diam$, $R_{\min}$, $R_g$, pairwise $L^p$-spread, and convex-hull size?
\end{question}

\begin{question}[Computability]
For fixed $n$, can $\rho_{D,n}(K)$ and $\CRad_{D,n}(K)$ be effectively approximated by finite-dimensional optimization?  Can these quantities be used as computable surrogates for the continuous invariants?
\end{question}

\begin{remark}[Discrete surrogates]
A concrete instance of the surrogate strategy is carried out in
\cite{OzawaDiscrete}, where the ropelength filtration is replaced by
lattice-filtered move graphs whose components and merge scales are computed by
finite graph search.  Discrete density and compression-radius profiles are
developed in \cite{OzawaDiscretePDensity}, where lattice length is normalized
by chord-length spread functionals and compression radii are defined using a
raw lattice-thickness convention.  The experiments in \cite{OzawaDiscrete}
produce verified move-path certificates; for example, the seed-generated BFACF
merge scales of supplied mirror seed pairs are $32$ for the figure-eight knot
and $44$ for the amphichiral knot $6_3$.  The latter is accompanied by a
verified move-path certificate and was independently checked by a second
implementation of the BFACF transition rules.  The supplementary code and data
are archived on Zenodo \cite{OzawaDiscreteData}.  Together, these works provide
computable finite-state surrogates for density, compression, and filtration
phenomena, while the relation between these lattice profiles and the continuous
invariants remains open.
\end{remark}

\begin{question}[Filtered densities and compression radii]
Following \cite{OzawaIdealStratum,OzawaDiscrete,OzawaDiscretePDensity}, filter $\Hg(K)$ by the
sublevel sets $\{\gamma\mid \CRad_D(\gamma)\leq\Lambda\}$ or
$\{\gamma\mid \rho_D(\gamma)\leq\Lambda\}$, and study the admissible
components and merge scales of the resulting spaces.  How do these persistence
objects depend on the choice of $D$, and how do they compare with the
ropelength filtration?  In particular, do amphichiral knot types exhibit
positive mirror merge barriers in these filtrations, as they do experimentally
in the BFACF lattice model, where the supplied $4_1$ and $6_3$ mirror seed
pairs have barriers $2$ and $4$ respectively \cite{OzawaDiscrete}?
\end{question}

%==================================================
\section*{Acknowledgments}
%==================================================

\paragraph{Use of generative AI.}
The author used ChatGPT (OpenAI) as an interactive aid during the preparation of this manuscript, including for mathematical discussion, the exploration of possible formulations and proof strategies, and the improvement of exposition. All definitions, statements, proofs, computations, and references were independently checked and verified by the author, who takes full responsibility for the accuracy, originality, and content of the manuscript.

%==================================================

\end{document}